\newtheorem{thm}{Theorem}
\newtheorem{rem}{Remark}
\newtheorem{lem}{Lemma}
\newtheorem{claim}{Claim}
\numberwithin{equation}{section}
\title[Energy Cascades in Forced NSE]{On energy cascades in non-homogeneous 3D Navier-Stokes Equations}
\author{R. Dascaliuc$^{1}$}\address{$^1$Department of Mathematics\\
Oregon State University\\ Corvallis, OR 97331} 
\author{Z. Gruji\'c$^2$}
\address{$^2$Department of Mathematics\\
University of Virginia\\ Charlottesville, VA 22904}
\email[R. Dascaliuc]{dascalir@math.oregonstate.edu}
\thanks{R.D.~was supported in part by National Science Foundation grant DMS-1211413.  Z.G.~acknowledges support of the Research Council of Norway
via grant 213474/F20 and the NSF via grant DMS-1212023.}
\subjclass[2000]{35Q30, 76F02}
\keywords{Navier-Stokes equations, turbulence, energy cascade}
\begin{document}

\begin{abstract}
We show -- in the framework of physical scales and $(K_1,K_2)$-averages  -- that Kolmogorov's dissipation law combined with the smallness condition on a Taylor length scale are sufficient to guarantee energy cascades in the forced  Navier-Stokes equations. Moreover, in the periodic case we establish restrictive scaling laws -- in terms of Grashof number -- for kinetic energy, energy flux, and energy dissipation rate. These are used to improve our sufficient condition for forced cascades in physical scales. 
\end{abstract}

\maketitle


\section{Introduction}

This paper aims at better understanding of mathematical mechanisms behind  the phenomenon of energy cascade, one of the main features of turbulence theory dating back to Kolmogorov \cite{Kol41a,Kol41b}.
This feature, often observed in physical experiments, involves formation of  a wide range of length scales, {\em inertial range}, where viscous effects are dominated by the transport of energy to lower scales. This idea that can be rephrased as a statement of near-constancy of energy flux across the inertial range:
\begin{equation}\label{en-casc-law}\langle \Phi \rangle_{R}\sim \varepsilon,\end{equation}
where $\langle \Phi \rangle_{R}$ is a certain average of the energy flux $\Phi$ at the suitably defined length scale $R$, and $\varepsilon$ is the average energy dissipation rate. 

\medskip

The precise mathematical meaning of the average and of the length scale varies. In homogeneous, isotropic turbulence, it is convenient to use the scales provided by Fourier wave-numbers (See e.g. \cite{Fri95}). Moreover, spectral approach is often used to interpret the experimental data obtained by devices taking measurements on physical scales (\cite{Gol96}). Physical scales become important when looking at the geometric structures, e. g. vortices in the turbulent flow. The fact that the empirical laws like (\ref{en-casc-law}) are confirmed experimentally regardless of the measurement methodology involved, points to deeper ergodicity properties of fluid flows, yet to be established mathematically from the underlying equations of motion. 

\medskip

These observations raise a question of whether the phenomenon of energy cascade could be mirrored in the equations of motions, in particular, the 3D Navier-Stokes equations (NSE). Such question is also related to the open problem of regularity for the Navier-Stokes system. Indeed, the transfer of energy to lower scales may be connected to a mechanism for a possible finite-time blow-up of the solutions (\cite{KP05,Che08,CF09,Tao14}).

\medskip

The  momentum equation in the incompressible Navier-Stokes system can be written as 
\begin{equation}\label{intro_NSE}
\begin{aligned}
\partial_t u -\nu\Delta u + (u\cdot\nabla)  u + \nabla p = f\;,
\end{aligned}
\end{equation}
where $u(x,t)$ represents Eulerian velocity of the fluid, $p(x,t)$ is the pressure, and $f(x,t)$ is the external force. The linear term $\nu\Delta u$ represents dissipation, and therefore is used to define the energy dissipation rate $\varepsilon$ (see Section 2). The nonlinearity $(u\cdot\nabla)  u + \nabla p$ represents transport of kinetic and potential energy between the scales of the fluid. Consequently, this term is used to define the total energy flux $\Phi$. Thus, mathematically, the question of energy redistribution between the scales of the flow  is connected with a careful analysis of the relative magnitudes of these terms in appropriate settings. The presence of the forcing term $f$ further complicates the picture. However, one should note that in the context of many typical boundary conditions (e.g. periodic and Dirichlet), the absence of external force leads to global stability (see \cite{Tem84}), which makes long-time (statistically stationary in Kolmogorov sense) turbulence impossible. Therefore, external forcing (specifically its size as well as its spacial and spectral complexity) plays a particularly important role in the energy cascade formation, that is currently far from being fully understood. 

\medskip

The first rigorous results about cascade formation in the 3D Navier-Stokes case are obtained in \cite{FMRT01a, FMRT01b, FMRT01c}. These results use a traditional Fourier approach to scales and propose a formalization of the idea of infinite-time averages by introducing the concept of statistical solutions. A different approach, still connected to the idea of time averages, was employed in \cite{VF80}. Moreover, in the (more amenable to analysis) 2D case, one can also glimpse into the dependence of long-time behavior of the solution on the nature of the driving force, which, as we noted before, has a direct connection to the question of turbulence (\cite{DFJ05, DFJ07,BFJR10}). Another approach to the question of turbulence relies on the Littlewood-Paley decomposition and the use of Besov spaces (\cite{Con97,CCFS08,OR10,CS14}, see also \cite{SF75}.)

\medskip

A novel method for the study of turbulent cascades -- dubbed \emph{$(K_1,K_2)$-averaging} -- was introduced 
by the authors in \cite{DG11}. It uses a form of a "sample" average corresponding to a covering of the domain of interest by balls of size $R$, satisfying certain optimality conditions (see (\ref{K1_con}), (\ref{K2_con})). Thus the scale in our approach is the actual physical length scale of the domain. The key observation is that near-constancy of such averages across {\em all the possible coverings} at a given scale, indicates lack of significant sign-fluctuations of the averaged quantity near that particular scale. Mathematically, this approach is amenable to the use of local-in-space estimates on individual balls in the cover, which we connect to global quantities via averaging. It is flexible enough to treat specific subregions of the fluid where Fourier approach is not feasible. Moreover, it can bring spacial complexity and anisotropy into the picture: for example, to investigate the effects of vorticity coherence on the enstrophy transfer in 3D, as well as to detect
the range of scales of formation and persistence of vortex filaments
(\cite{DG13, DG12c}). As an additional benefit, this approach provides rigorous justification of validity of sampling averaging measurement techniques in turbulent fluids.

\medskip

The method was utilized in \cite{DG11} to obtain a set of sufficient conditions for energy cascades (for decaying turbulence) in the homogeneous (i.e. unforced) NSE, which are consistent with the ones obtained in \cite{FMRT01a,FMRT01b}, as well as to establish locality of such cascades. Scale-locality
is still an open question in the Fourier setting, although important progress has been made 
in \cite{CCFS08} where the authors established quasi-locality of the flux
in the Littlewood-Paley setting (see also \cite{Eyi05} for a different approach to locality). Another instance
in which $(K_1,K_2)$-averaging proved fruitful is a manifestation of dissipation anomaly, where, in the inviscid limit, the Navier-Stokes solutions displaying the ever wider inertial ranges converge to a "dissipative" (i.e. manifesting anomalous energy dissipation) solution of the Euler equations, for which the energy cascade continues \emph{ad infinitum} (\cite{DG12a, DG12b}). 

\medskip

The present paper extends the results of \cite{DG11} to the non-homogeneous, forced case. The presence of the body force presents a problem due to the lack of adequate small-scale estimates on the localized forcing term $f\cdot u$. One of the key results of this paper is that the following relation, discovered experimentally by Richardson \cite{Ric26} (see also \cite{Tay35}),  more commonly known as "Kolmogorov dissipation law", 
\begin{equation}\label{K_diss}
\varepsilon\sim\frac{U^3}{L},
\end{equation}
where $U$ is the average speed, and $L$ is the characteristic scale, plays a central role in cascade formation in physical scales settings (see Theorem 1), allowing us to bound the force-related terms and close the estimates.

\medskip

If we assume more structure, for example, space-periodicity of the solutions, we can show that crucial scaling properties must hold for a turbulent flow (see Theorem \ref{scaling} and Remark \ref{rr2}). Namely, $\varepsilon$ scales like ${\mathrm{Gr}}^{3/2}$, while the kinetic energy scales like ${\mathrm{Gr}}$, where ${\mathrm{Gr}}$ is the Grashof Number, representing non-dimensional magnitude of the force (\ref{Gr-def}), (\ref{Gr-per}). These types of scaling laws were discovered in the context of statistical solutions in \cite{DFJ09}, and the fact that {\em the same scaling} is manifested in the framework of $(K_1,K_2)$-averages points to a remarkable consistency between the two approaches. We exploit this scaling to obtain a saturation property for Cauchy-Schwarz-type inequality for averages of $f\cdot u$, which, in turn, allows us to formulate a better sufficient condition for energy cascades, as well as to estimate the width of the inertial range in terms of  Grashof number (see Theorem 6 and the remarks after thereafter) 

\medskip

We finish the paper by pointing out several links between the notions of suitable and Leray-Hopf weak solutions in order to make the connection between the approach using Fourier scales and statistical solutions (infinite-time averages), and our approach using physical scales and $(K_1,K_2)$-averages more evident.


\section{Preliminaries.}

\subsection{Suitable weak solutions.} Consider 3D incompressible NSE in a certain domain $\Omega\subseteq\mathbb{R}^3$ driven by 
an external body force $f$

\begin{equation}\label{NSE}
\begin{aligned}
\partial_t u -\nu\Delta u + (u\cdot\nabla)  u + \nabla p = f\;\\
\nabla\cdot u=0
\end{aligned}
\end{equation}

For the most part, we will consider {\em suitable weak solutions} of (\ref{NSE}), the notion first introduced by Scheffer (see \cite{Sch77,CKN82,Lem02}.) These are weak (distributional) solutions that satisfy in particular the local energy inequality:

\begin{equation}\label{ener_ineq}
\iint\left(\frac{|u|^2}{2}+p\right)\, u\cdot\nabla\phi \ge
\nu\iint |\nabla u|^2\phi
-\iint\frac{|u|^2}{2}\left(\partial_t\phi+\nu\Delta\phi\right)
-\iint f\cdot u\, \phi
\end{equation}
holds for any $C_0^{\infty}$-test function $\phi(t,x)\ge 0$. 

The suitable solutions are known to exist in many standard scenarios, including the $\Omega=\mathbb{R}^3$ (with decay at infinity), space-periodic and well as other typical boundary conditions. Notably, the problem of uniqueness and regularity are still open questions in all these settings, although via a well-know regularity result, the 1-dimensional Hausdorff measure of space-time singular points of such solution is zero (\cite{CKN82}).


\subsection{$(K_1,K_2)$-averages}\label{av-sec}

To study the behavior of physical quantities in the context of suitable solutions of (\ref{NSE}) we will employ $(K_1,K_2)$-averages defined as follows.

Let $K_1$ and $K_2$ be the fixed positive constants and let $\Omega_0$ be a subdomain of $\Omega$. We generally require $\Omega_0$ to be a bounded simply-connected domain with a piecewise-smooth boundary with the distance from $\partial\Omega$ to $\partial \Omega_0$ to be bigger then or comparable the the diameter of $\Omega_0$. For simplicity,  assume 
\begin{equation}
\Omega_0=B(0,R_0)\quad\mbox{for some}\quad R_0>0\quad\mbox{and} \quad B(0,2R_0)\subseteq\Omega.
\end{equation}
We will refer to $\Omega_0$ as {\em integral domain}, and to its radius $R_0>0$ as the {\em integral scale}.

\medskip

To localize the solutions in space and time we will employ the {\em refined cut-off functions} :
 
 Let $C_0>0$ and $\delta\in(1/2,1)$ be fixed.

For each ball $B(x_0, R)$, a refined space cut-off is a function $\psi(x)$ such that $0\le\psi\le1$,  Supp$(\psi)\subseteq B(x_0,2R)$, $\psi=1$ on $B(x_0,R)$, and
\begin{equation}\label{cut-off}
\left|\nabla\psi(x)\right|\le\frac{C_0}{R}\,\psi^{\delta}(x),\qquad 
\left|\nabla\psi(x)\right|\le\frac{C_0}{R^2}\,\psi^{2\delta-1}(x).
\end{equation}

A refined time cut-off on $[0,T]$ is a function $\eta(t)$ such that $0\le\eta\le1$,  Supp$(\eta)\subseteq [0,T]$, and
\begin{equation}\label{eta}
\left|\eta'(x)\right|\le\frac{C_0}{T}\,\eta^{\delta}(x).
\end{equation}

Now, let $0<R<R_0$. Define a {\em $(K_1,K_2)$-covering at scale $R$} any covering of $\Omega_0$ by $n$ balls of radius $R$, $\{B(x_i, R)\}_{i=\overline{1,n}}$, such that: each $x_i\in\Omega_0$ and 
\begin{align}
&\left(\frac{R_0}{R}\right)^3\le  n \le\, K_1\,\left(\frac{R_0}{R}\right)^3\label{K1_con};\\ 
\mbox{each point in}\ \Omega_0\ &\mbox{is covered by no more then}\ K_2\ \mbox{ball} \ B(x_i,2R).\label{K2_con}
\end{align}
We call $K_1$ and $K_2$ respectively the {\em global} and {\em local multiplicities} of the covering. Note that such coverings exist if $K_1$ and $K_2$ are large enough. Moreover, $K_2$ in fact determines an upper bound on $K_1$ (the lower bound is determined by the geometry of $\mathbb{R}^3$.) For example, we can choose $K_1=K_2=8$.

\medskip

Let $\psi_0$ be a fixed global cut-off, i.e  a refined cut-off associated with $\Omega_0$. 

\medskip 

Given, a $(K_1,K_2)$-covering $\{B(x_i, R)\}_{i=\overline{1,n}}$, associate each ball $B(x_i,R)$ a refined cut-offs $\psi_i$. Notice that for all $x\in\Omega_0$,
\begin{equation}\label{cut-off_bds}
\psi_0(x)\le\sum\limits_{i=1}^n \psi_i(x)\le K_2\psi_0(x)
\end{equation}
In order to ensure compatibility of boundary elements with the global cut-off 
$\psi_0$, we will change the support assumptions for the cut-offs on the boundary of $\Omega_0$ 
so that (\ref{cut-off_bds}) actually holds for all $x\in\mathbb{R}^3$, while still satisfying bounds (\ref{cut-off}).

\medskip

If $Q$ is a physical quantity with the density $q$ (or more general, if $q$ is a distribution in $\mathcal{D}'(\Omega\times[0,T])$), then define a the {\em $(K_1,K_2)$-average of $Q$ at scale $R$ associated with $\{\psi_i\}_{i=\bar{1,n}}$ } the quantity
\begin{equation}\label{ave-def}
\langle Q\rangle_R=\frac{1}{n}\frac{1}{{R}^3}\frac{1}{T}\sum\limits_{i=1}^n(q,\eta\phi_i)\;.
\end{equation}  

\medskip

We will note that if $q$ is a nonnegative distribution, then all the averages on all scales are comparable to the global (integral scale) average:
\begin{equation}\label{positivity}
\frac{1}{K_1} Q_0\le\frac{1}{n}\left(\frac{R_0}{R}\right)^3Q_0\le\langle Q\rangle_R\le K_2\frac{1}{n}\left(\frac{R_0}{R}\right)^3Q_0\le K_2Q_0\;,
\end{equation}
where the global average $Q_0$ is
\begin{equation}
Q_0=\frac{1}{{R_0}^3}\frac{1}{T}(q,\eta\phi_0)\:.
\end{equation}

In the case of a sign-changing distribution, the averages can differ widely from scale to scale and depend largely the particular choice of the $(K_1,K_2)$-covering as well as cut-offs $\psi_i$. If for a range of scales {\em all} the averages are comparable, it means that, in statistical sense there are no significant fluctuations of the sign of $q$ on those scales. However, the averages may became uncorrelated on smaller scales.

\medskip

For example, in 1D for $K_1=K_2=3$, $N\gg 1$, if $q=M(0.5+\sin(Nx))$ on an interval $[-\pi,\pi]$ (i.e. $R_0=\pi$), the global average is $Q_0=M/2$, and on scales $R\gg 1/N$ (e.g. $R>10/N$), all the averages will be comparable to $Q_0$. However, if $R<1/(4N)$, the averages depend in an essential way on the choice of the covering: if we stack the balls in the covering to emphasize negative areas of $q$, the averages will be comparable to $-Q_0\sim -M$, while if we focus on positive areas we obtain averages comparable to $Q_0\sim M$. Of course, we can also obtain anything in between. Note that the scale on which $q$ changes sign is $\sim 1/N$.

\medskip

Let $\phi=\eta\psi$, where $\psi$ is a refined cut-off corresponding to $B(x_0,R)$. In the framework of turbulence theory we distinguish the following physical quantities (all per unit of mass on $B(x_0,R)$ over time interval $[0,T]$):

\[
e_{x_0,R,T}=\frac{1}{T}\frac{1}{R^3}\iint\frac{|u|^2}{2}\phi^{2\delta-1}\quad\makebox{localized kinetic energy},
\]

\[\varepsilon_{x_0,R,T}=\epsilon_{x_0,R,T}^{\infty}+\frac{1}{T}\frac{1}{R^3}\nu\iint |\nabla u|^2\phi\quad\makebox{localized viscous + anomalous energy dissipation},\]

\[
\Phi_{x_0,R,T}=\frac{1}{T}\frac{1}{R^3}\iint\left(\frac{|u|^2}{2}+p\right) u\cdot \nabla\phi \quad\makebox{kinetic + potential energy flux},
\]

\[
 |f|^2_{x_0,R,T} =\frac{1}{n}\frac{1}{T}\frac{1}{R^3}\iint |f|^2\phi\quad\makebox{localized square-size of the force}.
\]
(the powers associated with $\phi$ are chosen for technical reasons - see the Section \ref{sec1}).

In the notations above the local energy inequality (\ref{ener_ineq}) becomes 
\begin{equation}\label{loc_en_bal}
\Phi_{x_0,R,T}=\varepsilon_{x_0,R,T}-\frac{1}{T}\frac{1}{R^3}\iint \frac{|u|^2}{2}\left(\partial_t\phi+\nu\Delta\phi\right) - \frac{1}{T}\frac{1}{R^3} \iint f\cdot u\, \phi\;.
\end{equation}
In particular, the equality in the above is achieved by adding the anomalous dissipation $\epsilon_{x_0,R,T}^{\infty}$ -- technically, a non-negative distribution $\epsilon^{\infty}$ evaluated at $\phi$. 

\medskip

Given a $(K_1,K_2)$-cover of $\Omega_0$ and a corresponding collection of refined cut-offs 
$\{\phi_i\}_{i=\bar{1,n}}$, we have $(K_1,K_2)$-averages of energy, dissipation rate, flux, and the square-length of the force:

\[
\langle e \rangle_R=\frac{1}{n} \sum\limits_{i=1}^{n}e_{x_i,R,T}\;,\quad
\langle \varepsilon \rangle_R=\frac{1}{n} \sum\limits_{i=1}^{n} \varepsilon_{x_i,R,T}\;,\quad
\langle \Phi \rangle_R=\frac{1}{n} \sum\limits_{i=1}^{n} \Phi_{x_i,R,T}\;, \quad \langle |f|^2 \rangle_R=\frac{1}{n}\sum\limits_{i=1}^n |f|^2_{x_0,R,T}\;.\]

{\rem {\rm \ \\
\begin{enumerate}
\item[(a)] Strictly speaking $\langle e \rangle_R$  is not the average of energy density $|u|^2/2$ due to the $(2\delta-1)$-power attached to $\phi_i$. Nevertheless, this average enjoys the same positivity property as the usual averages defined by (\ref{ave-def}).
\item[(b)] The other averages -- $\langle \varepsilon \rangle_R$, $\langle \Phi \rangle_R$, and $\langle |f|^2 \rangle_R$, are precisely the $(K_1,K_2)$-averages of the corresponding distributions in the sense of (\ref{ave-def}). 
Moreover, notice that the $\varepsilon$  and $|f|^2$ are non-negative distributions,
while $\Phi$ is a sign-varying distribution $(u\cdot\nabla)\,u+\nabla p$.
\end{enumerate}
}}

\medskip

We also define the global space-time averages

\begin{equation}\label{global_en_ens}\begin{aligned}
e_0=\frac{1}{T}\frac{1}{R_0^3}\iint\frac{|u|^2}{2}\phi_0^{2\delta-1}\;,\quad
\varepsilon_{0}=\epsilon_0^{\infty}+\frac{1}{T}\frac{1}{R_0^3}\nu\iint |\nabla u|^2\phi_0\;,\\
\Phi_{0}=\frac{1}{T}\frac{1}{R_0^3}\iint\left(\frac{|u|^2}{2}+p\right) u\cdot \nabla\phi_0 \;,\quad
|f|_0^2=\frac{1}{T}\frac{1}{R_0^3}\iint{|f|^2}\phi_0\;.
\end{aligned}
\end{equation}
where $\phi_0=\eta\psi_0$ is the global cut-off, and $\epsilon_0^{\infty}$ is the anomalous dissipation corresponding to $\Omega_0$.

\medskip

On one hand, consistently with the positivity property (\ref{positivity}),

\begin{equation}\label{energy}
\frac{1}{K_1} e_0\le \frac{1}{n}\left(\frac{R_0}{R}\right)^3e_0\le\langle e\rangle_R\le K_2\frac{1}{n}\left(\frac{R_0}{R}\right)^3 e_0\le K_2e_0\;.
\end{equation}
\begin{equation}\label{enstr}
\frac{1}{K_1} \varepsilon_0\le \frac{1}{n}\left(\frac{R_0}{R}\right)^3\varepsilon_0\le\langle \varepsilon\rangle_R\le K_2\frac{1}{n}\left(\frac{R_0}{R}\right)^3 \varepsilon_0\le K_2\varepsilon_0\;,
\end{equation}
end
\begin{equation}\label{Force}
\frac{1}{K_1}  |f|^2_0\le \frac{1}{n}\left(\frac{R_0}{R}\right)^3|f|^2_0\le\langle |f|^2\rangle_R\le K_2\frac{1}{n}\left(\frac{R_0}{R}\right)^3 |f|^2_0\le K_2 |f|^2_0\;.
\end{equation}

On the other hand, $\Phi$ is an a priory sign-varying quantity, and by the above discussion, we will interpret the positivity of $\langle\Phi\rangle_R$ over a range of scales  $R$ as the fact, that, in average, the energy flows towards the lower scales over that range of scales - i.e. {\em energy cascade} takes place.

In relation to turbulence, it is convenient to use adimensional parameters as an indicator of complexity of the flow. In particular, we will use {\em Grashof number}, $\mathrm{Gr}$, which characterizes the average magnitude of the driving force and the {\em Reynolds number}, $\mathrm{Re}$, which characterizes the magnitude of the average velocity:
\begin{equation}\label{Gr-def}
\mathrm{Gr}=\frac{(|f|^2_0)^{1/2}}{\nu^2R_0^{-3}}\;,\quad \mathrm{Re}=\frac{e_0^{1/2}}{\nu R_0^{-1}}\;.
\end{equation}
Note that the expressions for $\mathrm{Gr}$ and $\mathrm{Re}$ are in fact the adimensional expressions for localized (to $\Omega_0\times[0,T]$) $L^2$-norms of $f$ and $u$ respectively. For convenience, we will also denote by $F_0$ the root-mean-square average of the force:
\begin{equation}\label{F_0}
F_0=(|f|_0^2)^{1/2}=\left(\frac{1}{T}\frac{1}{R_0^3}\iint{|f|^2}\phi_0\right)^{1/2}\;.
\end{equation}


\subsection{Leray-Hopf solutions in periodic case}\label{per-sec}

While the main result of Section \ref{sec1} is valid for any suitable weak solution, the last two sections of this paper will concentrate on  space-periodic flows. In this case we have a convenient functional formulation for (\ref{NSE}). We refer to \cite{Tem84, Tem97} for more background on the settings described below.

\medskip

Let $\Omega=[0,L]^3$, $L=4R_0$, while $f\in L^{3}(\Omega)$ is an $\Omega$-periodic, divergence-free, {\em time-independent} force.

By changing coordinates, we can consider that the solutions, $u$ and the force $f$ have zero averages, i.e.
\[\int\limits_{[0,L]^3}u=\int\limits_{[0,L]^3}f=0.\]

Define

\[
H=\{u\in L^2(\omega)\ :\ u\ \mbox{is}\ \Omega-\mbox{periodic},\ \nabla\cdot u=0,\ \int_{\Omega}u=0\},
\]
and denote by $\|\cdot\|$ and $(\cdot,\cdot)$ the $L^2$-norm and $L^2$-inner product on $\Omega$.

\medskip

For notational convenience, let $A=-\Delta$ - the Stokes operator, and $B(u,v)=P_L(u\cdot\nabla)u$, where $P_L$ is the Leray projector. Note that $(B(u,v),w)=\int_{[0,L]^3}(u\cdot\nabla v)\cdot w$, while $A$ is a positive-definite, self-adjoint unbounded operator with a compact (in $L^2$) inverse, and thus the powers of $A$ are well-defined. Moreover, the norms $\|A^{\alpha/2}\cdot\|^2$ are equivalent to $H^{\alpha}$-Sobolev norms on $\Omega$. We will denote $V=D(A^{1/2})$, and $V'$- the dual of V.

\medskip

Then we can write NSE in the following functional form on $H$:
\begin{equation}\label{functional}
u_t+\nu Au + B(u,u)=f\in H\;.
\end{equation}

In this case, one can prove existence of {\em Leray-Hopf weak solutions}, i.e, $u\in L^{\infty}([0,\infty),H)\cap L^2_{loc}((0,\infty),V)$, such that (\ref{functional}) is satisfied in $V'$:
\begin{equation}\label{weak_form}
(u,v)_t+(A^{1/2}u,A^{1/2}v)-(B(u,v),u)=(f,v)\quad\mbox{in distribution sense for all}\ v\in V\;,
\end{equation}
and for which the {\em Leray-Hopf energy inequality} holds:

\begin{equation}\label{leray_en_ineq}
\nu\int\limits_{t_0}^{t_1} \|\nabla u\|^2\le \frac{\|u(t_0)\|^2}{2}-\frac{\|u({t_1})\|^2}{2} + \int\limits_{t_0}^{t_1}  (f,u)\qquad \mbox{a.e. in}\ t_0\ \mbox{and for all}\ {t_1}\ge t_0 \;.
\end{equation}

\medskip

In particular, the Leray-Hopf solutions are regular (i.e. in $D(A)$) for all times, except possibly on a closed set of 1-dimensional Hausdorff measure zero. Therefore, (\ref{leray_en_ineq}) becomes equality on a dense set of disjoint open time-intervals in $[0,\infty)$.

\medskip

Another peculiar feature of Leray-Hopf solutions is the existence of the {\em weak global attractor}, $\mathcal{A}_w$ (cf. \cite{FT87,Che09,FRT10}). The set $\mathcal{A}_w\subset H$ can be defined as the set of all global (forward and backwards) in time Leray-Hopf solutions. one can show that $\mathcal{A}_w$ is bounded in $H$ and weakly attracts bounded sets as $t\to\infty$.

\medskip
It is known (see e.g. \cite{Lem02}) that the solutions originally constructed by Leray (see \cite{Ler34}), will satisfy both local energy inequality (\ref{ener_ineq}) (i.e. they are suitable solutions in \cite{Sch77} and \cite{CKN82} sense) as well as the global Leray energy inequality (\ref{leray_en_ineq}).

We note that the argument given in \cite{Lem02} is in whole $\mathbb{R}^3$, however, similar result holds in the periodic case, in fact  one can prove that Leray's method provides solutions that satisfy the following version of  the Leray-Hopf energy inequality:

\begin{equation}\label{L-H_ener_ineq}
\nu\int\limits_0^T\eta\, \|\nabla u\|^2\le \int\limits_0^T \eta_t\,\frac{\|u\|^2}{2} + \int\limits_0^T \eta\, (f,u)\;,
\end{equation}
where $\eta$ is a refined cut-off on $[0,T]$ provided by (\ref{eta}). 

\medskip

In this settings, we will find more convenient to use a slightly modified -- non-localized -- version of Grashof number:
\begin{equation}\label{Gr-per}
\mathrm{Gr}=\frac{\|f\|}{\nu^2 R_0^{-3/2}}\;.
\end{equation}

As we show in Appendix (section \ref{appx}), {\em any} Leray-Hopf solution will actually satisfy  (\ref{L-H_ener_ineq}).
Moreover, {\em any suitable weak solution} in the periodic case in addition to the local energy inequality (\ref{ener_ineq}) will necessarily satisfy (\ref{L-H_ener_ineq}) as well.

\medskip

Thus, in Section \ref{periodic_casc}, when talking talking about periodic solutions, we will assume that we are dealing with solutions that satisfy both local and global energy inequalities (\ref{ener_ineq}) and (\ref{L-H_ener_ineq}). We note that the results of Section \ref{en-enst-sec} do not require that solutions are suitable in Scheffer (\cite{Sch77}) sense, i.e. 
(\ref{ener_ineq}) is not assumed.


\section{Energy Cascade Theorem}\label{sec1}

Assume $f\in L^2(\Omega\times[0,T])$ is a driving force in the context of suitable solutions to NSE (see (\ref{ener_ineq})).
Also, let $\{\phi_i\}_{i=\overline{1,n}}$, $\phi_i=\eta\psi_i$, be a family of refined cut-offs corresponding to a $(K_1,K_2)$-average (see Section \ref{av-sec}). 

\medskip

For convenience, assume 

\begin{equation}\label{T_assum}
T\ge\frac{R_0^2}{\nu}.
\end{equation}

Also denote 
\[e_i=e_{x_i,R,T},\quad \varepsilon_i=\varepsilon_{x_i,R,T},\quad \mbox{and}\quad\Phi_i=\Phi_{x_i,R,T}\;.\]
Then the local energy balance (see (\ref{loc_en_bal})) is

\begin{equation}\label{loc_en_bal0}
\Phi_i=\varepsilon_i-\frac{1}{T}\frac{1}{R^3}\iint \frac{|u|^2}{2}\left(\partial_t\phi_i+\nu\Delta\phi_i\right) - \frac{1}{T}\frac{1}{R^3} \iint f\cdot u\, \phi_i\;.
\end{equation}

Recall, (\ref{enstr}) states:
\begin{equation}\label{enstr1}
 \frac{1}{n}\left(\frac{R_0}{R}\right)^3\varepsilon_0\le\langle \varepsilon\rangle_R\le K_2\frac{1}{n}\left(\frac{R_0}{R}\right)^3 \varepsilon_0\;.
\end{equation}

Use (\ref{cut-off}) and (\ref{eta}) to estimate:

\begin{equation}\label{en1}
\left|
\frac{1}{T}\frac{1}{R^3}\iint \frac{|u|^2}{2}\left(\partial_t\phi_i+\nu\Delta\phi_i\right) 
\right|\le C_0\left(\frac{1}{T}+\frac{\nu}{R^2}\right) \frac{1}{T}\frac{1}{R^3}\iint \frac{|u|^2}{2}\eta^{\delta}\psi_i^{2\delta-1}\le 2\,C_0 \frac{\nu}{R^2}\, e_i\, ,
\end{equation}
and thus, by (\ref{energy}),
\begin{equation}\label{en2}
\left| \langle \iint \frac{|u|^2}{2}\left(\partial_t\phi_i+\nu\Delta\phi_i\right) \:\rangle_R\right|\le 2\,C_0K_2 \frac{\nu}{R^2}\, \frac{1}{n}\left(\frac{R_0}{R}\right)^3e_0\, .
\end{equation}

We bound the force term by:

\[
\left|
\frac{1}{T}\frac{1}{R^3} \iint f\cdot u\, \phi_i
\right| 
\le
\sqrt{2} \frac{1}{T}\frac{1}{R^3}\left(\iint |f|^2\phi_i^{3-2\delta}\right)^{1/2}\left(\iint\frac{|u|^2}{2}\phi_i^{2\delta-1}\right)^{1/2}=\sqrt{2} \left(\frac{1}{T}\frac{1}{R^3}\iint |f|^2\phi_i^{3-2\delta}\right)^{1/2}\, e_i^{1/2}\;,
\]
and consequently
\[
\begin{aligned}
\left| \langle\: (f, u) \:\rangle_R\right|  &\le 
\sqrt{2}\, \frac{1}{n}\sum\limits_{i=1}^n \left(\frac{1}{T}\frac{1}{R^3}\iint |f|^2\phi_i^{3-2\delta}\right)^{1/2}\, e_i^{1/2} \\
&\le
\sqrt{2} \frac{1}{n} \left(\frac{1}{T}\frac{1}{R^3}\sum\limits_{i=1}^n\iint |f|^2\phi_i^{3-2\delta}\right)^{1/2} \left(\sum\limits_{i=1}^n e_i\right)^{1/2}\\
&\le  
\sqrt{2}\,
 \left(\frac{1}{n}\frac{1}{T}\frac{1}{R^3}\sum\limits_{i=1}^n\iint |f|^2\phi_i^{3-2\delta}\right)^{1/2} \left(\frac{1}{n}\sum\limits_{i=1}^n e_i\right)^{1/2}\\
 &\le\sqrt{2}\,\langle |f|^2\rangle_R^{1/2}\langle e \rangle_R^{1/2}\;,\\
\end{aligned}
\]
where in the last inequality we use that $\phi_i^{3-2\delta}\le\phi_i$ (since $3-2\delta>1$).
Thus, by (\ref{Force}), we have 
\[
\langle |f|^2 \rangle_R\le K_2\,\frac{1}{n}\left(\frac{R_0}{R}\right)^3{F_0^2}\;,
\]
and consequently,
\begin{equation}\label{force}
\left| \langle\: (f,u) \:\rangle_R\right|  \le \sqrt{2}\,K_2\, \frac{1}{n}\left(\frac{R_0}{R}\right)^3 F_0\, e_0^{1/2}\;.
\end{equation}

Next, taking the average in (\ref{loc_en_bal0}), using the bounds (\ref{enstr1}), (\ref{en1}), and (\ref{force}), we obtain:

\begin{equation}\label{ava_bounds}
\frac{1}{n}\left(\frac{R_0}{R}\right)^3 \left[\varepsilon_0 - 2C_0K_2\frac{\nu}{R^2}\, e_0 - \sqrt{2}K_2 {F_0}\, e_0^{1/2} \right]
\le \langle \Phi\, \rangle_R \le
 K_2\, \frac{1}{n}\left(\frac{R_0}{R}\right)^3\left[
\varepsilon_0 +2C_0\frac{\nu}{R^2}\, e_0 +  \sqrt{2}F_0\, e_0^{1/2}\right]\:. 
\end{equation}

Notice that (\ref{K1_con}) implies
\begin{equation}\label{R-n}
\frac{1}{K_1}\le\frac{1}{n}\left(\frac{R_0}{R}\right)^3\le 1\;,
\end{equation}
and so we obtain
\[
\frac{1}{K_1}\ \left[\varepsilon_0 - 2C_0K_2\frac{\nu}{R^2}\, e_0 -\sqrt{2} K_2 F_0\, e_0^{1/2} \right]
\le \langle \Phi\, \rangle_R \le
 K_2\, \left[
\varepsilon_0 + 2C_0\frac{\nu}{R^2}\, e_0 +  \sqrt{2}F_0\, e_0^{1/2}\right]
\]

Assume:
\[ \sqrt{2}\,K_2\, F_0\, e_0^{1/2}\le \frac{1}{2} \varepsilon_0\:,\]

Then

\[
\frac{1}{K_1}\left[\frac{1}{2}\varepsilon_0-2C_0K_2\frac{\nu}{R^2}e_0\right]\le \langle \Phi\, \rangle_R \le
K_2\left[ \frac{3}{2}\varepsilon_0+2C_0\frac{\nu}{R^2}e_0\right]\;,
\]
which implies
\[
\frac{1}{2K_1}\varepsilon_0\left(1-{4C_0K_2}\frac{\nu e_0/\varepsilon_0}{R^2}\right)
\le \langle \Phi\, \rangle_R \le
\frac{3K_2}{2}\varepsilon_0\left(1+\frac{4C_0}{3}\frac{\nu e_0/\varepsilon_0}{R^2}\right)\;,
\]
and consequently,
\begin{equation}
\frac{1}{2K_1}\varepsilon_0\left(1-\alpha\frac{\tau_0^2}{R^2}\right)
\le \langle \Phi\, \rangle_R \le
\frac{3K_2}{2}\varepsilon_0\left(1+\alpha\frac{\tau_0^2}{R^2}\right)\:,
\end{equation}
where
\begin{equation}
\alpha=4C_0K_2
\end{equation}
and
\begin{equation}\label{tau_0}
\tau_0=\left(\frac{\nu e_0}{\varepsilon_0}\right)^{1/2}\quad \mbox{is the Taylor scale.}
\end{equation}

If $\alpha\tau_0^2/R_0^2<1/2$ (or, equivalently $8C_0K_0\, \nu e_0/R_0^2\le\varepsilon_0$), the inequality above implies the following theorem:

\begin{thm}\label{casc-thm}
Let (\ref{T_assum}) be satisfied and denote
 \begin{equation}\label{alpha_0}
\alpha_0=\sqrt{8}K_2\quad\mbox{and}\quad \beta_0=8C_0K_2\;.
\end{equation}
Assume
\begin{equation}\label{ass1}
\alpha_0 F_0\,e_0^{1/2}\le\varepsilon_0,
\end{equation}
and
\begin{equation}\label{ass2} 
\beta_0\nu \frac{e_0}{R_0^2}\le\varepsilon_0
\end{equation}
hold. Then we have energy cascade
\begin{equation}
\frac{1}{4K_1}\, \varepsilon_0\le \langle \Phi\, \rangle_R \le \frac{9K_2}{4}\, \varepsilon_0
\end{equation}
for all $R$ inside the inertial range
\begin{equation}\label{inert-range}
\left[
\sqrt{\beta_0}\, \tau_0,\; R_0
\right]\;.
\end{equation}

\end{thm}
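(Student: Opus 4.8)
The plan is to assemble the cascade bound directly from the localized energy balance. Fix a $(K_1,K_2)$-covering at scale $R$ with refined cut-offs $\phi_i=\eta\psi_i$ and write \eqref{loc_en_bal0} on each ball $B(x_i,R)$. Averaging over the covering expresses $\langle\Phi\rangle_R$ as $\langle\varepsilon\rangle_R$ minus two error terms: a ``commutator'' term built from $\partial_t\phi_i+\nu\Delta\phi_i$, and a force term built from $f\cdot u\,\phi_i$. The goal is to show that, under \eqref{ass1} and \eqref{ass2}, both error terms are dominated by $\tfrac12\varepsilon_0$ (up to the universal factor $\tfrac1n(R_0/R)^3$) uniformly for $R$ in the claimed inertial range; combined with \eqref{enstr1} and the elementary two-sided bound \eqref{R-n} coming from \eqref{K1_con}, this pins $\langle\Phi\rangle_R$ between fixed multiples of $\varepsilon_0$.

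For the commutator term I would invoke the refined cut-off estimates \eqref{cut-off} and \eqref{eta} to replace $|\partial_t\phi_i+\nu\Delta\phi_i|$ by $C_0\bigl(\tfrac1T+\tfrac{\nu}{R^2}\bigr)\eta^{\delta}\psi_i^{2\delta-1}$; the hypothesis \eqref{T_assum} together with $R<R_0$ lets me absorb $\tfrac1T$ into $\tfrac{\nu}{R^2}$, so the term is bounded by $2C_0\tfrac{\nu}{R^2}e_i$ on each ball, hence by $2C_0K_2\tfrac{\nu}{R^2}\tfrac1n(R_0/R)^3 e_0$ after averaging and using the positivity comparison \eqref{energy}. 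This is the routine half of the argument.

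The delicate step is the force term, since there is no small-scale structural estimate on $f\cdot u$ at hand. Here the ad hoc powers of $\phi_i$ in the definitions of $e_{x_i,R,T}$ (power $2\delta-1$) and $|f|^2_{x_i,R,T}$ (power $1$) are exactly what makes the argument close: splitting $\phi_i=\phi_i^{(3-2\delta)/2}\phi_i^{(2\delta-1)/2}$ and applying Cauchy--Schwarz in $(x,t)$ gives $\bigl|\tfrac1T\tfrac1{R^3}\iint f\cdot u\,\phi_i\bigr|\le\sqrt2\bigl(\tfrac1T\tfrac1{R^3}\iint|f|^2\phi_i^{3-2\delta}\bigr)^{1/2}e_i^{1/2}$; a discrete Cauchy--Schwarz over $i$, the bound $\phi_i^{3-2\delta}\le\phi_i$ (valid since $\delta<1$), and the overlap property $\sum_i\psi_i\le K_2\psi_0$ then convert $\sum_i\iint|f|^2\phi_i^{3-2\delta}$ into $K_2\iint|f|^2\phi_0$, i.e.\ into $F_0^2$ via \eqref{Force}. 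Altogether $\bigl|\langle(f,u)\rangle_R\bigr|\le\sqrt2\,K_2\tfrac1n(R_0/R)^3 F_0\,e_0^{1/2}$, and imposing \eqref{ass1} with $\alpha_0=\sqrt{8}K_2=2\sqrt2 K_2$ makes this at most $\tfrac12\varepsilon_0\cdot\tfrac1n(R_0/R)^3$, so it is swallowed by half of the dissipation. The main obstacle lies precisely here: everything hinges on the compatibility between the chosen $\phi$-powers, the two Cauchy--Schwarz steps, and the overlap bound $\sum\psi_i\le K_2\psi_0$, and on isolating the dimensionally correct combination $F_0 e_0^{1/2}$ that the Kolmogorov-type hypothesis \eqref{ass1} is designed to control.

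Finally I would combine the three estimates, clear the factor $\tfrac1n(R_0/R)^3$ using \eqref{R-n}, and rename $\nu e_0/\varepsilon_0$ as $\tau_0^2$ via \eqref{tau_0}, obtaining
\[
\frac{1}{2K_1}\varepsilon_0\Bigl(1-\alpha\frac{\tau_0^2}{R^2}\Bigr)\le\langle\Phi\rangle_R\le\frac{3K_2}{2}\varepsilon_0\Bigl(1+\alpha\frac{\tau_0^2}{R^2}\Bigr),\qquad \alpha=4C_0K_2.
\]
Since $\beta_0=8C_0K_2=2\alpha$, hypothesis \eqref{ass2} reads exactly $\beta_0\tau_0^2\le R_0^2$, which keeps the interval $[\sqrt{\beta_0}\,\tau_0,\,R_0]$ nonempty; and for any $R$ in that interval one has $\alpha\tau_0^2/R^2\le\alpha/\beta_0=\tfrac12$, so the factor $1-\alpha\tau_0^2/R^2$ is $\ge\tfrac12$ while $1+\alpha\tau_0^2/R^2$ is $\le\tfrac32$. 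This yields $\tfrac1{4K_1}\varepsilon_0\le\langle\Phi\rangle_R\le\tfrac{9K_2}{4}\varepsilon_0$ throughout the inertial range \eqref{inert-range}, which is the assertion of the theorem.
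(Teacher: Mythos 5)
Your proposal is correct and follows essentially the same route as the paper: the same local energy balance averaged over the cover, the same absorption of $1/T$ into $\nu/R^2$ via \eqref{T_assum}, the same $\phi_i^{(3-2\delta)/2}\phi_i^{(2\delta-1)/2}$ splitting with the two Cauchy--Schwarz steps and the overlap bound to reach $\sqrt2\,K_2\,\tfrac1n(R_0/R)^3F_0e_0^{1/2}$, and the same final bookkeeping with $\alpha=4C_0K_2$, $\beta_0=2\alpha$. Nothing to add.
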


Notice that  the condition (\ref{ass2}) is essentially the same as in the cascade theorem for the unforced case, see \cite{DG11}. It requires that the Taylor length scale $\tau_0$ is smaller then the integral scale $R_0$. The other condition, (\ref{ass1}), contains dependence on the size of the force (automatically satisfied if $f=0$), which requires the energy dissipation rate to be big enough compared to both the magnitude of $f$ and $e_0$. As we will see in the periodic case, this condition amounts to saturation of Kolmogorov dissipation law. Note that the width of the inertial range (\ref{inert-range}) is not directly dependent on (\ref{ass1}).


\section{Bounds on average energy and enstrophy}\label{en-enst-sec}

Consider the 3D incompressible NSE with periodic boundary conditions inside $\Omega=[0,L]^3$,  driven by an $L^2$, divergence-free, time-independent force $f$ - see Section \ref{per-sec}. Set the integral scale $R_0=L/4$.

Our goal is to study the kinetic energy and its dissipation rate on the {\em whole} periodic box $\Omega$. As we noted before,  in this section we will only require that $u$ is a Leray-Hopf solution, and as consequence (see Theorem \ref{str_en_ineq} and the Remark that follows), satisfy the energy inequality (\ref{L-H_ener_ineq}). Thus, the results would hold even for (hypothetical) Leray-Hopf solutions that are not suitable in Scheffer  sense (\cite{Sch77}).

\subsection{A consequence of the energy inequality}\label{sec2.1}

We can take advantage of periodicity to simplify definitions for global energy and enstrophy used in sections \ref{av-sec} and \ref{sec1}.

\medskip

Namely, define the (modified)  global energy
\begin{equation}\label{periodic_global-en}
e_0=\frac{1}{T}\frac{1}{R_0^3}\int\limits_0^T\eta^{\delta}\,\frac{\|u\|^2}{2}\;,
\end{equation}
and the total (viscous+anomalous) dissipation rate:
\begin{equation}\label{periodic_global-enst}
\varepsilon_0=\frac{1}{T}\frac{1}{R_0^3}\nu\int\limits_0^T\eta\, \|\nabla u\|^2\, + \varepsilon_{\infty},
\end{equation}
where the anomalous dissipation $\epsilon_{\infty}\ge 0$ is such that
\begin{equation}\label{en-balance}
\varepsilon_0=\frac{1}{T}\frac{1}{R_0^3}\int\limits_0^T \eta_t\,\frac{\|u\|^2}{2} +\frac{1}{T}\frac{1}{R_0^3} \int\limits_0^T \eta\, (f,u)\;.
\end{equation}
 
We bound
\[
\left|\frac{1}{T}\frac{1}{R_0^3}\int_0^T\eta_t\, \frac{\|u\|^2}{2}\right| \le \frac{C_0}{T}e_0,
\]
\[
\left|\frac{1}{T}\frac{1}{R_0^3}\int_0^T \eta\, (f,u) \right|\le \frac{1}{T}\frac{1}{R_0^3} \left(\int_0^T \eta^{2-\delta}\|f\|^2\right)^{1/2}\left(\int\limits_0^T \eta^{\delta}\|u\|^2\right)^{1/2}\le \sqrt{2}\frac{\|f\|}{R_0^{3/2}}\,e_0^{1/2}.
\]

Use these in (\ref{en-balance}) to obtain

\begin{equation}\label{Eef1}
\varepsilon_0\le C_0\frac{e_0}{T}+\sqrt{2}\frac{\|f\|}{R_0^{3/2}} e_0^{1/2}
\end{equation}

If $u\in\mathcal{A}_w$, i.e. $u$ is on the weak attractor (cf. \cite{FT87}), then
\[\|u\|^2\le \left(\frac{2}{\pi}\right)^4\frac{R_0^4}{\nu^2}\,\|f\|^2,\]
and consequently,
\[e_0\le \frac{8}{\pi^4}\frac{R_0}{\nu^2}\|f\|^2\;.\]
Therefore on the weak attractor
\begin{equation}\label{e-bound}
C_0\frac{e_0}{T}=\frac{C_0}{T} e_0^{1/2}e_0^{1/2}\le\frac{\sqrt{8}C_0}{\pi^2}\frac{1}{T}\frac{R_0^2}{\nu}\frac{\|f\|}{R_0^{3/2}} e_0^{1/2},
\end{equation}
and so (\ref{Eef1}) yields the allowing.

\begin{thm}\label{Eef-thm}
Assume that $u(t)\in\mathcal{A}_w$ and 
\begin{equation}\label{T1}
T\ge\frac{2C_0}{\pi^2}\frac{R_0^2}{\nu}\;.
\end{equation}
Then
\begin{equation}\label{Eef0}
\varepsilon_0\le 2\sqrt{2}\frac{\|f\|}{R_0^{3/2}} e_0^{1/2}\;.
\end{equation}
\end{thm}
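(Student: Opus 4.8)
The plan is to assemble three facts that have all been prepared in the text immediately preceding the statement: the consequence (\ref{Eef1}) of the cut-off energy inequality (\ref{en-balance}), the a priori bound on the weak attractor, and the hypothesis (\ref{T1}) on the averaging window $T$. The only real content is showing that the ``parasitic'' term $C_0 e_0/T$ in (\ref{Eef1}) can be absorbed into the force term $\sqrt{2}\,\|f\|R_0^{-3/2}e_0^{1/2}$.

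First I would recall how (\ref{Eef1}) is obtained: starting from the energy balance (\ref{en-balance}), the $\eta_t$-term is bounded by $(C_0/T)e_0$ using (\ref{eta}) and the definition (\ref{periodic_global-en}) of $e_0$, and the $(f,u)$-term is bounded by Cauchy--Schwarz in $t$ together with $\eta^{2-\delta}\le 1$, giving $\sqrt{2}\,\|f\|R_0^{-3/2}e_0^{1/2}$; hence $\varepsilon_0 \le C_0 e_0/T + \sqrt{2}\,\|f\|R_0^{-3/2}e_0^{1/2}$. Next comes the step where the attractor hypothesis is genuinely used: since $u(t)\in\mathcal{A}_w$, the standard bound $\|u\|\le \|f\|/(\nu\lambda_1)$ with $\lambda_1=(\pi/(2R_0))^2$ the first Stokes eigenvalue on $[0,4R_0]^3$ yields $\|u\|^2\le (2/\pi)^4 R_0^4\nu^{-2}\|f\|^2$, and therefore, by (\ref{periodic_global-en}) and $\eta^{\delta}\le 1$,
\[
e_0 \le \frac{8}{\pi^4}\,\frac{R_0}{\nu^2}\,\|f\|^2.
\]

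Then I would write $C_0 e_0/T = (C_0/T)\,e_0^{1/2}\,e_0^{1/2}$ and estimate one of the two factors $e_0^{1/2}$ by the displayed bound, which is exactly (\ref{e-bound}):
\[
C_0\,\frac{e_0}{T} \le \frac{\sqrt{8}\,C_0}{\pi^2}\,\frac{1}{T}\,\frac{R_0^2}{\nu}\,\frac{\|f\|}{R_0^{3/2}}\,e_0^{1/2}.
\]
Now the hypothesis (\ref{T1}), $T\ge (2C_0/\pi^2)\,R_0^2/\nu$, together with $\sqrt{8}/2=\sqrt{2}$, makes the prefactor at most $\sqrt{2}$, so $C_0 e_0/T \le \sqrt{2}\,\|f\|R_0^{-3/2}e_0^{1/2}$. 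Substituting this into (\ref{Eef1}) and combining the two now-identical terms gives $\varepsilon_0\le 2\sqrt{2}\,\|f\|R_0^{-3/2}e_0^{1/2}$, which is (\ref{Eef0}).

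There is no serious obstacle: the argument is a bookkeeping assembly of (\ref{Eef1}), the attractor estimate, and (\ref{e-bound}). The one place to be careful is the constant chase, making sure that the threshold in (\ref{T1}) lines up precisely with the coefficient $2\sqrt{2}$ (equivalently, that $\tfrac{\sqrt{8}C_0}{\pi^2 T}\cdot\tfrac{R_0^2}{\nu}\le \sqrt{2}$ is exactly condition (\ref{T1})). It is also worth flagging explicitly that the attractor hypothesis enters only through the uniform bound on $e_0$, which is what permits absorbing $C_0 e_0/T$ into the force term; for a generic Leray--Hopf solution no such absorption is available and (\ref{Eef0}) need not hold.
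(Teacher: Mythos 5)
Your proposal is correct and follows the paper's own argument essentially verbatim: it combines the intermediate inequality (\ref{Eef1}) with the weak-attractor bound $e_0\le \frac{8}{\pi^4}\frac{R_0}{\nu^2}\|f\|^2$, uses it on one factor of $e_0^{1/2}$ exactly as in (\ref{e-bound}), and absorbs the $C_0e_0/T$ term via the hypothesis (\ref{T1}). The constant chase ($\sqrt{8}/2=\sqrt{2}$) is also exactly the paper's.
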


{\rem\label{rem1}{\rm 
Notice that since $K_2>1$, the bound (\ref{Eef0}) is incompatible with the assumption (\ref{ass1}), which means that Theorem \ref{casc-thm} cannot be applied in this particular case if the integral domain is the whole periodic box $\Omega$. Of course, the assumptions of the cascade theorem might still hold on a small enough subregion of $\Omega$. In section \ref{periodic_casc} we will modify conditions (\ref{ass1}) and (\ref{ass2}) to ensure a cascade on the global domain.
}}

\subsection{A lower bound on energy.}

In fluid dynamics, it is natural to assume that the bigger is the driving force, the bigger is the velocity (and the kinetic energy), and hence the Reynolds number of the flow. In this section our goal is to obtain the lower bound on the kinetic energy $e_0$ in periodic setting described above. The idea behind the bounds presented here is similar to \cite{DFJ09}, where the lower bound on energy was obtained in a different context.

\smallskip

In what follows, we assume $f\in H^1$ (or $f\in D(A^{1/2})$). 

\smallskip

Then, by duality, multiplying NSE by $\eta A^{-1}f$, and integrating by parts, keeping in mind the divergence-free condition, we obtain:
\begin{equation}\label{eq1}
-\int\limits_0^{T}\eta_t (u,A^{-1}f) + \nu \int\limits_0^T \eta\, (u,f) - \int\limits_0^T \eta\, (B(u,A^{-1}f),u) =
\int\limits_0^T \eta\, \|A^{-1/2}f\|^2\;.
\end{equation}
As before, we bound
\[
\left|\frac{1}{T}\frac{1}{R_0^3}\int_0^T \eta_t (u,A^{-1}f)\,\right|\le \frac{C_0}{T}\frac{1}{T}\frac{1}{R_0^3}\int_0^T \eta^{\delta}\|u\|\,\|A^{-1}f\|\le \frac{\sqrt{2}C_0}{T}\frac{\|A^{-1}f\|}{R_0^{3/2}} e_0^{1/2}\;,
\] 
\[
\left|\frac{1}{T}\frac{1}{R_0^3}\nu \int_0^T \eta\, (f,u)\,\right| \le \sqrt{2}\nu \frac{\|f\|}{R_0^{3/2}} e_0^{1/2}\:
\]
and use the Agmon inequality $\|w\|_{\infty}\le C_A \|A^{1/2}w\|^{1/2}\|Aw\|^{1/2}$ to bound the trilinear term:
\[
\left|\frac{1}{T}\frac{1}{R_0^3}\int_0^T \eta\, (B(u,A^{-1}f),u)\,\right|\le
\frac{1}{T}\frac{1}{R_0^3}\int_0^T \eta\, C_A  \|f\|^{1/2}\|A^{1/2}f\|^{1/2}\|u\|^2=
2C_A  \|f\|^{1/2}\|A^{1/2}f\|^{1/2}\, e_0\;.
\]
Finally,
\[
\frac{1}{T}\frac{1}{R_0^3}\int\limits_0^T \eta\, \|A^{-1/2}f\|^2=
c_{\eta}  \frac{\|A^{-1/2}f\|^2}{R_0^3}\;,
\]
where $c_\eta\in(0,1)$ is defined by
\[c_\eta=\frac{1}{T}\int\limits_0^T\eta\]

Then, (\ref{eq1}) implies:
\begin{equation}\label{eq2}
c_{\eta} \frac{\|A^{-1/2}f\|^2}{R_0^3}\le \sqrt{2}\left( \frac{C_0}{T}\frac{\|A^{-1}f\|}{\|f\|} + \nu \right)\, \frac{\|f\|}{R_0^{3/2}} e_0^{1/2} + 2C_A  \|f\|^{1/2}\|A^{1/2}f\|^{1/2}\, e_0\;.
\end{equation}

Note that if (\ref{T1}) holds, i.e,
\[T\ge \frac{2C_0}{\pi^2}\frac{R_0^2}{\nu}\;,\]
then, using the Poincar\'e inequality $\|A^{-1}f\|\le \left({2/\pi}\right)^2 {R_0^2}\,\|f\|$,
\[
\frac{C_0}{T}\frac{\|A^{-1}f\|}{\|f\|} + \nu\le \frac{\pi^2}{2} \frac{\nu}{R_0^2} \left(\frac{2}{\pi}\right)^2 {R_0^2} + \nu =3\nu.
\]
For now, we will denote 
\[\gamma_f= \frac{C_0}{\nu T}\frac{\|A^{-1}f\|}{\|f\|} + 1,\]
remembering that when (\ref{T1}) holds, then $\gamma_f\le 3$. Note however, that for any $T$, by choosing rough enough $f$, we can make $\gamma_f$ as close to 1 as needed.

We can view (\ref{eq2}) as a quadratic inequality in $e_0^{1/2}$. Thus, we solve
\begin{equation}\label{eq3}
e_0^{1/2}\ge\frac{-\sqrt{2}\gamma_f \nu\|f\|/R_0^{3/2}+\sqrt{2\gamma_f^2 \nu^2\|f\|^2/R_0^3+8C_A\, \|f\|^{1/2}\|A^{1/2}f\|^{1/2}c_{\eta}\|A^{-1/2}f\|^2/R_0^3     }     }{4C_A\, \|f\|^{1/2}\|A^{1/2}f\|^{1/2}}
\end{equation}

Note that for $g,h>0$ and $\alpha\in(0,1)$,
\[-h+\sqrt{h^2+g^2}\ge\,\alpha\, g\quad\Leftrightarrow\quad g\ge\frac{2\alpha}{1-\alpha^2} h\;.\]
Apply the equivalence above to (\ref{eq3}) with $\alpha=1/\sqrt{2}$, $g^2=8c_{\eta}C_A\, \|f\|^{1/2}\|A^{1/2}f\|^{1/2}\|A^{-1/2}f\|^2$ and $h^2=2\gamma_f^2 \nu^2\|f\|^2$ to obtain

\begin{equation}
e_0^{1/2}\ge \frac{\frac{1}{\sqrt{2}}\,g}{4R_0^{3/2}C_A\, \|f\|^{1/2}\|A^{1/2}f\|^{1/2}}=
\frac{1}{{2}} \frac{c_{\eta}^{1/2}}{C_A^{1/2}}\frac{1}{R_0^{3/2}}\frac{\|A^{-1/2}f\|}{\|f\|^{1/4}\|A^{1/4}f\|^{1/4}}\:,
\end{equation}
provided
\begin{equation}
8c_{\eta}C_A\,\|f\|^{1/2}\|A^{1/2}f\|^{1/2}\|A^{-1/2}f\|^2\ge (2\sqrt{2})^2 2\gamma_f^2\nu^2\|f\|^2
\end{equation}

We collect the result above in the following theorem.

\begin{thm}\label{lb-e-thm}
Assume 
\begin{equation}\label{F-ass}
\|f\|\ge \frac{2\gamma_f^2\nu^2}{c_{\eta}C_A}\frac{\|f\|^{1/2}}{\|A^{1/2}f\|^{1/2}}\frac{\|f\|^2}{\|A^{-1/2}f\|^2}\;,
\end{equation}
where 
\begin{equation}
\gamma_f= \frac{C_0}{\nu T}\frac{\|A^{-1}f\|}{\|f\|} + 1\quad\mbox{and}\quad c_\eta=\frac{1}{T}\int\limits_0^T\eta
\end{equation}
Then
\begin{equation}\label{lb-e}
e_0\ge \frac{c_{\eta}}{4C_A}\frac{1}{R_0^{5/2}} \frac{\|A^{-1/2}f\|^2}{\|f\|^{3/2}\|A^{1/2}f\|^{1/2}}\, \frac{\|f\|}{R_0^{1/2}}\;.
\end{equation}

\end{thm}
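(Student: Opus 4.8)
\medskip
The bound has essentially been assembled in the computation leading up to the statement; here is how I would organize it. The plan is to test the weak formulation (\ref{weak_form}) against the fixed, time-independent field $v=A^{-1}f$ (admissible since $f\in D(A^{1/2})$ forces $A^{-1}f\in D(A^{3/2})\subset V$), multiply by the refined temporal weight $\eta$ of (\ref{eta}), integrate over $[0,T]$, and use $\nabla\cdot u=0$ together with integration by parts in $t$ (the boundary contributions vanish because $\mathrm{Supp}(\eta)\subseteq[0,T]$). Using self-adjointness of $A$ one has $(A^{1/2}u,A^{1/2}A^{-1}f)=(u,f)$ and $(f,A^{-1}f)=\|A^{-1/2}f\|^2$, so the computation produces the identity (\ref{eq1}), whose right-hand side, after normalizing by $T^{-1}R_0^{-3}$, is the strictly positive driving term $c_\eta\|A^{-1/2}f\|^2/R_0^3$ with $c_\eta=\frac1T\int_0^T\eta\in(0,1)$.

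Next I would estimate the three terms on the left of (\ref{eq1}). For the $\eta_t$-term, Cauchy--Schwarz in space together with $|\eta'|\le (C_0/T)\eta^\delta$ bounds it by a constant multiple of $\frac{C_0}{T}\frac{\|A^{-1}f\|}{R_0^{3/2}}e_0^{1/2}$; for the viscous term $\nu(u,f)$, plain Cauchy--Schwarz gives a constant multiple of $\nu\frac{\|f\|}{R_0^{3/2}}e_0^{1/2}$. The trilinear term $(B(u,A^{-1}f),u)=\int_\Omega(u\cdot\nabla A^{-1}f)\cdot u$ I would bound by $\|\nabla A^{-1}f\|_\infty\|u\|^2$ and then apply the Agmon inequality $\|w\|_\infty\le C_A\|A^{1/2}w\|^{1/2}\|Aw\|^{1/2}$ to $w=\nabla A^{-1}f$ (for which $\|A^{1/2}w\|\simeq\|f\|$ and $\|Aw\|\simeq\|A^{1/2}f\|$), producing a bound $2C_A\|f\|^{1/2}\|A^{1/2}f\|^{1/2}e_0$ after using $\eta\le\eta^\delta$. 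Collecting these three estimates converts (\ref{eq1}) into the inequality (\ref{eq2}), which has the form $a\,e_0+b\,e_0^{1/2}-c\ge0$ with $a,b,c>0$.

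The rest is algebra. I would merge the two linear-in-$e_0^{1/2}$ coefficients into one; under the time-scale hypothesis (\ref{T1}) the Poincar\'e inequality bounds $\gamma_f=\frac{C_0}{\nu T}\frac{\|A^{-1}f\|}{\|f\|}+1$ by $3$, but the argument only needs to carry $\gamma_f$ along. Solving the quadratic inequality for $e_0^{1/2}$ gives (\ref{eq3}), i.e. $e_0^{1/2}\ge(-h+\sqrt{h^2+g^2})/(2a)$ with $h$ a multiple of $\gamma_f\nu\|f\|$ and $g^2$ a multiple of $c_\eta C_A\|f\|^{1/2}\|A^{1/2}f\|^{1/2}\|A^{-1/2}f\|^2$. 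Then I would apply the elementary equivalence $-h+\sqrt{h^2+g^2}\ge\alpha g\iff g\ge\frac{2\alpha}{1-\alpha^2}h$ with $\alpha=1/\sqrt2$, so that $\frac{2\alpha}{1-\alpha^2}=2\sqrt2$: the requirement $g\ge2\sqrt2\,h$, equivalently $g^2\ge8h^2$, is precisely the hypothesis (\ref{F-ass}), and under it the resulting lower bound collapses to (\ref{lb-e}) after simplifying the powers of $R_0$, $\nu$ and the norms of $f$.

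The step needing the most care is not the bookkeeping of constants but the justification of the identity (\ref{eq1}) for a merely Leray--Hopf solution: since $u$ need not be regular everywhere, (\ref{eq1}) must be read as the time-integrated weak identity obtained by testing (\ref{weak_form}) against the \emph{fixed} field $A^{-1}f$. All the pairings $(u,f)$, $(u,A^{-1}f)$, $(B(u,A^{-1}f),u)$ are well defined for a.e. $t$ from $u\in L^\infty(H)\cap L^2_{\mathrm{loc}}(V)$ and $A^{-1}f\in D(A)$, and in particular the Agmon estimate falls entirely on the smooth factor $A^{-1}f$ and demands no regularity of $u$. Verifying that this weak identity genuinely holds (it is the $\nu$-weighted analogue of the way (\ref{L-H_ener_ineq}) arises), together with the choice $\alpha=1/\sqrt2$ that makes the threshold coincide exactly with (\ref{F-ass}), is where I would concentrate the rigor.
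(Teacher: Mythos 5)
Your proposal is correct and follows essentially the same route as the paper: testing the weak formulation against $\eta A^{-1}f$, bounding the $\eta_t$-term and the viscous term by Cauchy--Schwarz, controlling the trilinear term via Agmon's inequality applied to the smooth factor $\nabla A^{-1}f$, and then solving the resulting quadratic inequality in $e_0^{1/2}$ with the elementary equivalence at $\alpha=1/\sqrt2$ so that the threshold $g^2\ge 8h^2$ is exactly (\ref{F-ass}). Your extra remark on why the duality identity is legitimate for a non-regular Leray--Hopf solution is a sensible point of rigor that the paper leaves implicit, but it does not change the argument.
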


\subsection{One-sided Kolmogorov's dissipation law.}

Suppose now that the assumptions of both Theorem \ref{Eef-thm} and Theorem \ref{lb-e-thm} hold, i.e.
$f\in D(A^{1/2})$, $u\in\mathcal{A}_w$, and (\ref{T1}) and (\ref{F-ass}) hold.

Then,

\[
\varepsilon_0\le \frac{2\sqrt{2}}{R_0^{3/2}}e_0^{1/2}\, \|f\|\le 
\frac{2\sqrt{2}}{R_0^{3/2}}e_0^{1/2}R_0^{1/2}
\frac{4C_A}{c_{\eta}}{R_0^{5/2}} \frac{\|f\|^{3/2}\|A^{1/2}f\|^{1/2}}{\|A^{-1/2}f\|^2} e_0=
\frac{8\sqrt{2}C_A}{c_{\eta}} R_0^{5/2} \frac{\|f\|^{3/2}\|A^{1/2}f\|^{1/2}}{\|A^{-1/2}f\|^2}\, \frac{e_0^{3/2}}{R_0}.
\]

Thus we have the following result.

\begin{thm}\label{apriori_bds}
Assume $f\in D(A^{1/2})$, $u\in \mathcal{A}_w$, 
\begin{equation}\label{ass3}
\|f\|\ge \sigma_f\quad\mbox{and}\quad T\ge c_1\frac{R_0^2}{\nu}\;.
\end{equation}
Then:
\begin{equation}\label{lkd1}
\varepsilon_0\le 2\sqrt{2}\frac{\|f\|}{R_0^{3/2}}\, e_0^{1/2}\;,
\end{equation}
\begin{equation}\label{lkd2}
e_0\ge \theta_f \frac{\|f\|}{R_0^{1/2}}\;,
\end{equation}
and
\begin{equation}\label{lkd3}
\varepsilon_0\le \frac{2\sqrt{2}}{\theta_f} \frac{e_0^{3/2}}{R_0},
\end{equation}
where
\[c_1=\frac{2C_0}{\pi^2}\;,\quad \sigma_f=\frac{2\gamma_f^2\nu^2}{c_{\eta}C_A}\frac{\|f\|^{1/2}}{\|A^{1/2}f\|^{1/2}}\frac{\|f\|^2}{\|A^{-1/2}f\|^2}\;,\quad \theta_f=\frac{c_{\eta}}{4C_A}\frac{1}{R_0^{5/2}} \frac{\|A^{-1/2}f\|^2}{\|f\|^{3/2}\|A^{1/2}f\|^{1/2}}\;.\]
\end{thm}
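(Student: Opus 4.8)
The proof of Theorem \ref{apriori_bds} is essentially a bookkeeping exercise: it collects the three inequalities already established in this section and repackages them under a single cleaner hypothesis. The plan is to show that the two structural hypotheses of the theorem, namely $\|f\|\ge\sigma_f$ and $T\ge c_1 R_0^2/\nu$, are exactly the hypotheses needed to invoke Theorems \ref{Eef-thm} and \ref{lb-e-thm} simultaneously, and then to chain the resulting bounds.

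\medskip

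First I would observe that $T\ge c_1 R_0^2/\nu$ with $c_1=2C_0/\pi^2$ is literally condition (\ref{T1}), so Theorem \ref{Eef-thm} applies (using also the standing assumption $u\in\mathcal{A}_w$) and yields (\ref{lkd1}), i.e. $\varepsilon_0\le 2\sqrt{2}\,\|f\|\,R_0^{-3/2}\,e_0^{1/2}$. Next, I would note that under (\ref{T1}) the quantity $\gamma_f$ introduced before Theorem \ref{lb-e-thm} satisfies $\gamma_f\le 3$ (via the Poincar\'e inequality $\|A^{-1}f\|\le(2/\pi)^2 R_0^2\|f\|$ as computed in the text); in any case $\gamma_f$ is the quantity appearing in $\sigma_f$, so the hypothesis $\|f\|\ge\sigma_f$ is precisely hypothesis (\ref{F-ass}) of Theorem \ref{lb-e-thm}. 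Since also $f\in D(A^{1/2})$ by assumption, Theorem \ref{lb-e-thm} applies and gives (\ref{lb-e}), which is exactly (\ref{lkd2}) with $\theta_f=\dfrac{c_\eta}{4C_A}\dfrac{1}{R_0^{5/2}}\dfrac{\|A^{-1/2}f\|^2}{\|f\|^{3/2}\|A^{1/2}f\|^{1/2}}$.

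\medskip

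Finally, for (\ref{lkd3}) I would combine the two: from (\ref{lkd2}) we get $\|f\|\le \theta_f^{-1} R_0^{1/2} e_0$, and substituting this into the right-hand side of (\ref{lkd1}) gives
\[
\varepsilon_0 \le 2\sqrt{2}\,\frac{e_0^{1/2}}{R_0^{3/2}}\cdot\frac{R_0^{1/2}e_0}{\theta_f}
= \frac{2\sqrt{2}}{\theta_f}\,\frac{e_0^{3/2}}{R_0},
\]
which is (\ref{lkd3}). This is exactly the one-line computation displayed just before the theorem statement, read with the abbreviation $\theta_f$ in place of the explicit force-dependent constant.

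\medskip

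There is no real obstacle here — the theorem is a packaging statement. The only point requiring a moment of care is making sure the constant $\gamma_f$ is controlled (so that $\sigma_f$ is a bona fide finite threshold and the hypothesis $\|f\|\ge\sigma_f$ is meaningful): this is where $T\ge c_1 R_0^2/\nu$ is used a second time, through the bound $\gamma_f\le 3$. One should also record that all of $c_\eta\in(0,1)$, $C_A$, $C_0$ are absolute (or cut-off-determined) constants, so $\sigma_f$ and $\theta_f$ depend only on $f$, $\nu$, and $R_0$, as the statement asserts.
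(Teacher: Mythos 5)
Your proposal is correct and follows exactly the paper's own route: (\ref{lkd1}) is Theorem \ref{Eef-thm} under the hypothesis $T\ge c_1R_0^2/\nu=(\ref{T1})$, (\ref{lkd2}) is Theorem \ref{lb-e-thm} under $\|f\|\ge\sigma_f=(\ref{F-ass})$, and (\ref{lkd3}) is obtained by substituting $\|f\|\le\theta_f^{-1}R_0^{1/2}e_0$ into (\ref{lkd1}), which is precisely the chained computation the paper displays immediately before the theorem. Nothing is missing.
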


As can be seen from Theorem \ref{apriori_bds}, the classical Kolmogorov dissipation law (\ref{K_diss}), an important part of turbulent phenomenology, can be viewed as saturation of the a priori bound (\ref{lkd3}). The non-dimensional shape factors of the force $\sigma_f$ and $\theta_f$ while invariant under scaling, can be small for rough enough $f\in D(A^{1/2})$.

 We note that the rigorous on-sided bounds in Kolmogorov's dissipation law were obtained previously in various settings \cite{CD94,Foi97,DF02,DFJ09}. The bound (\ref{lkd3}) is specific to our, time-localized case.

In the next section we will investigate several consequences of saturation of this bound. 

\subsection{Turbulent scaling}

Assume that the solution $u$, in addition to the conditions of Theorem \ref{apriori_bds},  saturates  (\ref{lkd3}), i.e. Kolmogorov's dissipation law holds:
\begin{equation}\label{satur1}
\mbox{There exists}\ K>0\ \mbox{such that}\quad  K\,\frac{e_0^{3/2}}{R_0}\le\varepsilon_0\le \frac{2\sqrt{2}}{\theta_f} \frac{e_0^{3/2}}{R_0}.
\end{equation}

Then we can prove the following theorem.

\begin{thm}\label{scaling}
Assume (\ref{ass3}) and (\ref{satur1}) hold. Then
\begin{equation}\label{scal1}
\frac{K^{3/2}\theta_f^{3/2}}{8^{1/4}}
\frac{\|f\|}{R_0^{3/2}}\, e_0^{1/2}\le\varepsilon_0\le 2\sqrt{2}\frac{\|f\|}{R_0^{3/2}}\, e_0^{1/2}\;,
\end{equation}
\begin{equation}\label{scal2}
 \theta_f \frac{\|f\|}{R_0^{1/2}}\le e_0\le \frac{2\sqrt{2}}{K}\frac{\|f\|}{R_0^{1/2}}\;,
\end{equation}
and
\begin{equation}\label{scal3}
 K \theta_f^{3/2}\frac{\|f\|^{3/2}}{R_0^{7/4}}\le\varepsilon_0\le \frac{8^{5/4}}{K^{3/2}\theta_f} \frac{\|f\|^{3/2}}{R_0^{7/4}}\;.
\end{equation}
\end{thm}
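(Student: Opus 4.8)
The plan is to combine the two-sided Kolmogorov law (\ref{satur1}) with the a priori bounds (\ref{lkd1})--(\ref{lkd3}) of Theorem \ref{apriori_bds} in a purely algebraic fashion; no new PDE input is needed, so the whole argument is bootstrapping between the inequalities $\varepsilon_0 \lesssim \|f\| R_0^{-3/2} e_0^{1/2}$, $e_0 \gtrsim \theta_f \|f\| R_0^{-1/2}$, and $K e_0^{3/2}/R_0 \le \varepsilon_0 \le 2\sqrt{2}\,\theta_f^{-1} e_0^{3/2}/R_0$.

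First I would derive (\ref{scal2}). The upper bound $e_0 \le \frac{2\sqrt2}{K}\|f\| R_0^{-1/2}$ comes by feeding the lower Kolmogorov bound $\varepsilon_0 \ge K e_0^{3/2}/R_0$ into the a priori upper bound (\ref{lkd1}): $K e_0^{3/2}/R_0 \le 2\sqrt2\,\|f\| R_0^{-3/2} e_0^{1/2}$, then cancel $e_0^{1/2}$ and one power of $R_0^{-1/2}$. The lower bound is just (\ref{lkd2}) restated. Next, (\ref{scal1}): the right-hand inequality is again (\ref{lkd1}) verbatim; for the left-hand inequality, start from the lower Kolmogorov bound $\varepsilon_0 \ge K e_0^{3/2}/R_0 = K e_0 \cdot e_0^{1/2}/R_0$ and substitute the lower bound $e_0 \ge \theta_f \|f\| R_0^{-1/2}$ for the factor $e_0$, giving $\varepsilon_0 \ge K \theta_f \|f\| R_0^{-1/2} \cdot e_0^{1/2}/R_0 = K\theta_f \|f\| R_0^{-3/2} e_0^{1/2}$. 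This already yields a lower bound of the stated form but with constant $K\theta_f$ rather than $K^{3/2}\theta_f^{3/2}/8^{1/4}$; to get the claimed (presumably sharper or differently-packaged) constant one instead writes $e_0^{1/2}$ in $\varepsilon_0 \ge K e_0^{3/2}/R_0$ partly in terms of $\varepsilon_0$ via the reverse of (\ref{lkd3}) — i.e. $e_0^{3/2} \ge (\theta_f/(2\sqrt2)) \varepsilon_0 R_0$ is the wrong direction, so instead use (\ref{lkd2}) to bound $e_0$ from below and feed the result into the relation $\varepsilon_0 \ge K e_0^{3/2}/R_0$ iterating once: from $e_0 \ge \theta_f\|f\|R_0^{-1/2}$ and $\varepsilon_0 \ge K e_0^{3/2}/R_0$ one also gets, combining with (\ref{lkd1}) rearranged as $e_0^{1/2} \ge \varepsilon_0 R_0^{3/2}/(2\sqrt2 \|f\|)$, a chain producing the $3/2$ powers. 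The bookkeeping of which inequality is substituted into which, and in what order, is where I expect to have to be careful to land exactly on the stated constants.

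Finally (\ref{scal3}) follows by eliminating $e_0$ between (\ref{scal1}) and (\ref{scal2}): from the upper bound $\varepsilon_0 \le 2\sqrt2 \|f\| R_0^{-3/2} e_0^{1/2}$ and $e_0 \le \frac{2\sqrt2}{K}\|f\|R_0^{-1/2}$ we get $\varepsilon_0 \le 2\sqrt2 \|f\| R_0^{-3/2} (2\sqrt2/K)^{1/2} \|f\|^{1/2} R_0^{-1/4} = (2\sqrt2)^{3/2} K^{-1/2} \|f\|^{3/2} R_0^{-7/4}$; to reach the stated $8^{5/4} K^{-3/2}\theta_f^{-1}$ one instead eliminates $e_0$ using the two-sided Kolmogorov law $\varepsilon_0 \le 2\sqrt2\,\theta_f^{-1} e_0^{3/2}/R_0$ together with $e_0 \le (2\sqrt2/K)\|f\|R_0^{-1/2}$, which gives $\varepsilon_0 \le 2\sqrt2\,\theta_f^{-1} R_0^{-1} (2\sqrt2/K)^{3/2}\|f\|^{3/2} R_0^{-3/4} = (2\sqrt2)^{5/2} K^{-3/2}\theta_f^{-1} \|f\|^{3/2} R_0^{-7/4}$, and $(2\sqrt2)^{5/2} = 2^{15/4}$ — one then checks this matches $8^{5/4}=2^{15/4}$, which it does. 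The lower bound in (\ref{scal3}) comes symmetrically from the lower bounds in (\ref{scal1}) and (\ref{scal2}): $\varepsilon_0 \ge \frac{K^{3/2}\theta_f^{3/2}}{8^{1/4}}\|f\|R_0^{-3/2} e_0^{1/2} \ge \frac{K^{3/2}\theta_f^{3/2}}{8^{1/4}}\|f\|R_0^{-3/2}(\theta_f\|f\|R_0^{-1/2})^{1/2}$, and simplifying the constant $8^{-1/4} = (2\sqrt2)^{-1/2}$ one should recover exactly $K\theta_f^{3/2}$ after the $3/2$ and $1/2$ powers of $\theta_f$ combine to $2$ — wait, that gives $\theta_f^2$, so more plausibly the lower bound of (\ref{scal1}) used for this step is the cruder $K\theta_f \|f\|R_0^{-3/2}e_0^{1/2}$, whence $\varepsilon_0 \ge K\theta_f \|f\|R_0^{-3/2}\cdot \theta_f^{1/2}\|f\|^{1/2}R_0^{-1/4} = K\theta_f^{3/2}\|f\|^{3/2}R_0^{-7/4}$, matching the claim. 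So the single genuine subtlety — and the main obstacle — is tracking the exact numerical constants and, in particular, deciding at each step whether to route through the a priori bounds (\ref{lkd1})--(\ref{lkd3}) or through the saturation hypothesis (\ref{satur1}); the rest is elementary manipulation of monomials in $e_0$, $\varepsilon_0$, $\|f\|$, $R_0$.
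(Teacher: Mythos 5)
Your overall strategy --- purely algebraic bootstrapping between the a priori bounds (\ref{lkd1})--(\ref{lkd3}) and the saturation hypothesis (\ref{satur1}) --- is exactly what the paper does, and your derivations of (\ref{scal2}), of the upper bound in (\ref{scal1}), and of both sides of (\ref{scal3}) are correct and coincide with the paper's. The one place you leave unresolved is the lower bound in (\ref{scal1}) with the stated constant $K^{3/2}\theta_f^{3/2}/8^{1/4}$: the ``chain'' you sketch via rearranging (\ref{lkd1}) as $e_0^{1/2}\ge \varepsilon_0 R_0^{3/2}/(2\sqrt2\,\|f\|)$ does not produce the bound (substituting it into $\varepsilon_0\ge K e_0\, e_0^{1/2}/R_0$ together with (\ref{lkd2}) merely reproduces the consistency condition $K\theta_f\le 2\sqrt2$). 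There are two clean ways to finish. (i) The paper's route: prove (\ref{scal3}) \emph{first}, then deduce the lower bound of (\ref{scal1}) from it together with the upper bound of (\ref{scal2}):
\[
\frac{K^{3/2}\theta_f^{3/2}}{8^{1/4}}\frac{\|f\|}{R_0^{3/2}}\, e_0^{1/2}
\le \frac{K^{3/2}\theta_f^{3/2}}{8^{1/4}}\frac{\|f\|}{R_0^{3/2}}\left(\frac{2\sqrt2}{K}\frac{\|f\|}{R_0^{1/2}}\right)^{1/2}
= K\theta_f^{3/2}\frac{\|f\|^{3/2}}{R_0^{7/4}}\le \varepsilon_0\;,
\]
using $(2\sqrt2)^{1/2}=8^{1/4}$. (ii) Alternatively, the direct bound you already obtained, $\varepsilon_0\ge K\theta_f\,\|f\|R_0^{-3/2}e_0^{1/2}$, in fact suffices: the two-sidedness of (\ref{satur1}) forces $K\le 2\sqrt2/\theta_f$, hence $(K\theta_f)^{1/2}\le 8^{1/4}$ and $K\theta_f\ge K^{3/2}\theta_f^{3/2}/8^{1/4}$, so your constant is stronger than the stated one and the claim follows. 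Either observation closes the only loose end; the rest of your write-up is sound.
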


\begin{proof}
Using (\ref{lkd1}) together with (\ref{satur1}) we obtain:
\[
K  \frac{e_0^{3/2}}{R_0}\le 2\sqrt{2}\frac{\|f\|}{R_0^{3/2}}\, e_0^{1/2},
\]
and thus
\[
e_0\le\frac{2\sqrt{2}}{K}\,\frac{\|f\|}{R_0^{1/2}},
\]
so (\ref{scal2}) holds.

Using (\ref{scal2}) together with (\ref{satur1}), we obtain
\[
K\frac{\theta_f^{3/2}\frac{\|f\|^{3/2}}{R^{3/4}} }{R_0}\le \varepsilon_0 \le \frac{2\sqrt{2}}{\theta_f}\frac{  \left(\frac{2\sqrt{2}}{K}\right)^{3/2} \frac{\|f\|^{3/2}}{R^{3/4}} }{R_0},
\]
and (\ref{scal3}) follows.

Finally, using (\ref{scal3}) together with (\ref{scal2}) we obtain:
\[
\frac{K^{3/2}\theta_f^{3/2}}{8^{1/4}}
\frac{\|f\|}{R_0^{3/2}}\, e_0^{1/2}
\le\frac{K^{3/2}\theta_f^{3/2}}{8^{1/4}}
\frac{\|f\|}{R_0^{3/2}}
 \left(\frac{2\sqrt{2}}{K}\frac{\|f\|}{R_0^{1/2}}\right)^{1/2}
\le K \theta_f^{3/2}\frac{\|f\|^{3/2}}{R_0^{7/4}}\le\varepsilon_0\;,
\]
and thus (\ref{scal1}) holds.

\end{proof}

{\rem\label{rr2}{\rm 
If  $K=c/\theta_f$, with $0<c<2\sqrt{2}$, i.e. inequality (\ref{satur1}) is "fully" saturated, then (\ref{scal1})--(\ref{scal3}) become:
\[
\frac{c^{3/2}}{8^{1/4}}\frac{\|f\|}{R_0^{3/2}}\, e_0^{1/2}\le\varepsilon_0\le 2\sqrt{2}\frac{\|f\|}{R_0^{3/2}}\, e_0^{1/2}\;,
\]
\[
 \theta_f \frac{\|f\|}{R_0^{1/2}}\le e_0\le \frac{2\sqrt{2}}{c}\theta_f\,\frac{\|f\|}{R_0^{1/2}}\;,
\]
and
\[
c \theta_f^{1/2}\frac{\|f\|^{3/2}}{R_0^{7/4}}\le\varepsilon_0\le \frac{8^{5/4}}{c^{3/2}}\theta_f^{1/2} \frac{\|f\|^{3/2}}{R_0^{7/4}}\;.
\]
}}

{\rem\label{rem4}{\rm
In fact, proceeding similarly to the proof above, one can show that saturation of any one of the inequalities (\ref{lkd1})--(\ref{lkd3}), brings about the same conclusions as Theorem  \ref{scaling}, with slightly different absolute constants (the parts of the constants depending on the shape of force will be the same.)
}}

{\rem\label{rem04}{\rm
Theorem {\ref{scaling}} is reminiscent of the main results of \cite{DFJ09}. There the framework of Fourier scales and stationary statistical solutions was used to show that the saturation of Kolmogorov's dissipation law induces much similar scalings on energy and its dissipation rate, namely $\sim {\rm Gr}$ and $\sim {\rm Gr}^{3/2}$ respectively, where ${\rm Gr}=\|f\|/\nu^2R_0^{-3/2}$ is the Grashof number (\ref{Gr-per}). In that particular framework, ${\rm Gr}\gg 1$ would {\em automatically} imply energy cascade in Fourier scales on the inertial range $[\tau_0,r_f]$, where $r_f$ is the scale of the force. The reason for this is that the aforementioned ${\rm Gr}$-scaling implies $\tau_0(=\kappa_{\tau}^{-1})\sim{\rm Gr}^{-1/4}\ll1$ (see \cite{DFJ09}), and thus the sufficient condition for cascades given in \cite{FMRT01a,FMRT01b} holds.
}}


\section{Global cascades in periodic setting.}\label{periodic_casc}

In this section  assume that $u$ is a suitable solution of the space periodic NSE in \cite{Sch77} and \cite{CKN82} sense and as before, denote $\Omega=[0,L]^3$ - the basic periodic box. Recall (see Appendix \ref{appx}), that $u$ must also be a weak solution satisfying the (localized in time) global energy inequality (\ref{L-H_ener_ineq}), and therefore the results of Section \ref{en-enst-sec} apply.

\medskip

Our goal here is to develop a sufficient condition for energy cascades, similar to Theorem \ref{casc-thm}, applicable globally on an integral domain consisting of the entire periodic box $\Omega$. 

\medskip

First, note that in this case we can modify our averages to reflect periodicity: 

\begin{enumerate}
\item The cut-off functions will be considered $L$-periodic: we can extend each $\psi_i$ periodically, provided Supp$(\psi_i)$ is inside a cubic box of size $L$ (i.e. the radius $R$ of the covering balls should be less then $R_0:=L/4$.)
\item Otherwise, on each box of size $L$ we will still require a $(K_1,K_2)$-covering. 
\item The global (space) cut-off therefore can be taken $\psi_0=1$ with no special condition on "boundary" elements of the covering.
\item To accommodate time cut-offs, we will again use the modified energy inequality (\ref{L-H_ener_ineq}).
\end{enumerate}

Thus, the energy and energy dissipation rate -- $e_0$ and $\epsilon_0$ defined in Section \ref{sec2.1} are the global energy and its dissipation rate 
defined for the integral domain $\Omega_0=\Omega$.

\medskip

Recall that in this settings equation (\ref{en-balance}) implies that Theorem \ref{casc-thm} is impossible on $\Omega$ (see Remark \ref{rem1}.)

\subsection{A better treatment of $\langle (f,u)\rangle_R$ term.}

Recall that the 1st condition of the cascade Theorem \ref{casc-thm} was  consequence of estimating 
\[
\langle (f,u)\rangle_R=\frac{1}{R^3}\frac{1}{T}\frac{1}{n}\sum\limits_{i=1}^{n}\iint f\cdot u\, \phi_i\, dxdt,
\]
where $\phi_i=\eta\psi_i$ are the space-time cut-offs.
It turns out, in the periodic case we can obtain a better bound for the averages on the global domain $\Omega$.

\begin{lem}\label{ll1}
Assume $u\in\mathcal{A}_w$ and the following conditions are satisfied:
\begin{equation}\label{ll1-a1}
\frac{2\sqrt{2}}{K} \frac{\|f\|}{R_0^{3/2}}e_0^{1/2}\le \varepsilon_0.
\end{equation}
and
\begin{equation}\label{ll1-a2}
T\ge \frac{4K C_0}{\pi^2}\frac{R_0^{2}}{\nu},
\end{equation}
where $K>1$ is a certain constant, that can depend on shape of $f$.

Then for any collection of the cut-offs $\{\psi_i\}$ associated with a $(K_1,K_2)$-average, we have

\begin{equation}\label{ll1-concl}
\left| \iint \, (\eta\sum\limits_{i=1}^{n}\psi_i)\, f\cdot u \: dxdt\right| \le  KK_2\int\limits_0^T\eta\,(f,u)\, dt.
\end{equation}

(In particular, the integral in the right-hand side must be positive, which is indeed the case - see (\ref{ll1-p1}) below.)

\end{lem}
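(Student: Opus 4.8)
The plan is to compare the ``summed'' cut-off $\eta\sum_{i=1}^n\psi_i$ against the single global time cut-off $\eta$ by exploiting the two-sided bound \eqref{cut-off_bds}, which in the periodic case (where $\psi_0=1$) reads $1\le\sum_{i=1}^n\psi_i\le K_2$ pointwise on $\Omega$. Since $\psi_0=1$, the quantity $\iint(\eta\sum_i\psi_i)f\cdot u$ differs from $\iint \eta\,f\cdot u = \int_0^T\eta\,(f,u)\,dt$ only through a factor that is pinched between $1$ and $K_2$ \emph{if} the sign of $(f,u)$ were constant in space-time; but $f\cdot u$ is sign-varying in $x$, so the naive pointwise comparison fails. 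The correct route is: first show that $\int_0^T\eta\,(f,u)\,dt>0$ and is in fact comparable to $R_0^3 T\,\varepsilon_0$ (up to the anomalous/time-derivative corrections), and then bound the left-hand side by $K_2\iint \eta\,|f||u| \le \sqrt 2\,K_2\,(\int_0^T\eta\|f\|^2)^{1/2}(\int_0^T\eta\|u\|^2)^{1/2}$-type estimates, and finally absorb this Cauchy--Schwarz bound back into $\int_0^T\eta\,(f,u)\,dt$ using the scaling/saturation information. Concretely, I would proceed in the following steps.

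\textbf{Step 1 (positivity of the driving integral).} From \eqref{en-balance}, $\frac{1}{T}\frac{1}{R_0^3}\int_0^T\eta\,(f,u)\,dt = \varepsilon_0 - \frac{1}{T}\frac{1}{R_0^3}\int_0^T\eta_t\frac{\|u\|^2}{2}\,dt$. Bounding the last term by $\frac{C_0}{T}e_0$ as in \eqref{e-bound} and using $u\in\mathcal A_w$ together with \eqref{ll1-a2} (which is $T\ge\frac{4KC_0}{\pi^2}\frac{R_0^2}{\nu}$, a strengthening of \eqref{T1} by the factor $2K$), one gets $\frac{C_0}{T}e_0\le\frac{1}{2K}\cdot\frac{\sqrt 8 C_0}{\pi^2}\frac{R_0^2}{\nu T}\cdot\ldots$; the point is that this term is at most $\frac{1}{2K}\cdot 2\sqrt2\frac{\|f\|}{R_0^{3/2}}e_0^{1/2}\le \frac{1}{2K}\cdot K\varepsilon_0=\frac12\varepsilon_0$ by \eqref{ll1-a1}. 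Hence
\[
\frac{1}{T}\frac{1}{R_0^3}\int_0^T\eta\,(f,u)\,dt\ge \varepsilon_0-\tfrac12\varepsilon_0=\tfrac12\varepsilon_0>0,
\]
which gives the parenthetical claim \eqref{ll1-p1}.

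\textbf{Step 2 (upper bound on the left-hand side).} Using $0\le\sum_i\psi_i\le K_2$ on $\Omega$, Cauchy--Schwarz in $x$ and then in $t$ with the refined bound \eqref{eta} split as $\eta=\eta^{2-\delta}\cdot\eta^{2\delta-1}$ (exactly as in the estimate preceding \eqref{Eef1}),
\[
\left|\iint(\eta\textstyle\sum_i\psi_i)\,f\cdot u\right|
\le K_2\iint\eta\,|f|\,|u|
\le \sqrt2\,K_2\,R_0^{3/2}T^{1/2}\|f\|\,\Bigl(\tfrac1T\tfrac1{R_0^3}\!\int_0^T\!\eta^\delta\|u\|^2\Bigr)^{1/2}\!R_0^{3/2}T^{1/2}
= \sqrt2\,K_2\,R_0^3\,T\,\tfrac{\|f\|}{R_0^{3/2}}\,e_0^{1/2}.
\]
(Here I am being schematic about the powers of $T$ and $R_0$; the clean statement is $\left|\iint(\eta\sum_i\psi_i)f\cdot u\right|\le\sqrt2\,K_2\,R_0^3 T\,\frac{\|f\|}{R_0^{3/2}}e_0^{1/2}$.)

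\textbf{Step 3 (combine).} By \eqref{ll1-a1}, $\sqrt2\,\frac{\|f\|}{R_0^{3/2}}e_0^{1/2}\le \frac{\sqrt2}{\,2\sqrt2/K\,}\,\varepsilon_0\cdot\frac{1}{1}= \frac{K}{2}\varepsilon_0\le K\varepsilon_0$, while Step 1 gives $\varepsilon_0\le 2\cdot\frac{1}{T}\frac{1}{R_0^3}\int_0^T\eta(f,u)\,dt$. Chaining these,
\[
\left|\iint(\eta\textstyle\sum_i\psi_i)f\cdot u\right|
\le K_2\,R_0^3T\cdot K\varepsilon_0
\le K_2\,R_0^3T\cdot K\cdot 2\cdot\tfrac{1}{T}\tfrac{1}{R_0^3}\!\int_0^T\!\eta(f,u)\,dt
= 2KK_2\!\int_0^T\!\eta(f,u)\,dt.
\]
This gives \eqref{ll1-concl} up to the absolute constant (I would tighten Step 1's $\tfrac12\varepsilon_0$ threshold, or absorb the factor $2$ into $K_2$ via the freedom in choosing $K_2\ge8$, to land exactly on $KK_2$; alternatively the cleaner bookkeeping is to note $\sqrt2\frac{\|f\|}{R_0^{3/2}}e_0^{1/2}\le\frac K2\varepsilon_0$ and $\varepsilon_0\le 2\langle(f,u)\rangle$-integral, whose product of constants is exactly $K_2\cdot\frac K2\cdot 2=KK_2$).

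\textbf{Main obstacle.} The delicate point is \emph{not} the spatial comparison — that is a clean sandwich via \eqref{cut-off_bds} — but controlling the time-derivative error term $\frac1T\frac1{R_0^3}\int_0^T\eta_t\frac{\|u\|^2}{2}$ in Step 1 so that $\int_0^T\eta(f,u)$ is genuinely \emph{positive and comparable to} $R_0^3T\varepsilon_0$ rather than merely bounded above by it; this is precisely where the attractor bound $u\in\mathcal A_w$, the enlarged time-averaging window \eqref{ll1-a2}, and the saturation-type hypothesis \eqref{ll1-a1} (equivalently the lower Kolmogorov bound with constant $K$) all enter, and getting the constants to close at exactly $KK_2$ requires care. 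Everything else is a two-step Cauchy--Schwarz identical to the computations already carried out before \eqref{Eef1}.
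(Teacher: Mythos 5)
Your proposal is correct and follows essentially the same route as the paper: positivity and a lower bound for $\int_0^T\eta\,(f,u)$ via the energy balance, the attractor bound and the enlarged $T$, then $\sum_i\psi_i\le K_2$ plus Cauchy--Schwarz to get $\sqrt2\,K_2\,TR_0^3\,\|f\|R_0^{-3/2}e_0^{1/2}$, and chaining the two (your parenthetical bookkeeping $K_2\cdot\tfrac K2\cdot 2=KK_2$ indeed lands on the stated constant; the paper just skips the detour through $\varepsilon_0$ by recording the lower bound directly as $\int_0^T\eta\,(f,u)\ge TR_0^3\,\tfrac{\sqrt2}{K}\|f\|R_0^{-3/2}e_0^{1/2}$). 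The only slip is notational: the split should be $\eta=\eta^{(2-\delta)/2}\cdot\eta^{\delta/2}$ (your $\eta^{2-\delta}\cdot\eta^{2\delta-1}=\eta^{1+\delta}$), which does not affect the final estimate.
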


\begin{proof}
As in the proof of Theorem \ref{Eef-thm}, we obtain, using the energy balance (\ref{en-balance}),

\[
\frac{1}{T}\frac{1}{R_0^3}\int\limits_0^T \eta\,(f,u)\, dt = 
\varepsilon_0-\frac{1}{T}\frac{1}{R_0^3}\int\limits_0^T\eta_t\,\frac{\|u\|^2}{2}\,dt\ge\varepsilon_0-\frac{C_0}{T}e_0.
\]
Next, apply (\ref{e-bound}) to the last term of the inequality above:
\[
\frac{C_0}{T}e_0\le\frac{2\sqrt{2}C_0}{\pi^2}\frac{1}{T}\frac{R_0^2}{\nu}\frac{\|f\|}{R_0^{3/2}} e_0^{1/2}\le\frac{\sqrt{2}}{K}\frac{\|f\|}{R_0^{3/2}}e_0^{1/2}.
\]
Thus, using (\ref{ll1-a1}) to estimate $\varepsilon_0$, we obtain
\begin{equation}\label{ll1-p1}
\frac{1}{T}\frac{1}{R_0^3}\int\limits_0^T \eta\,(f,u)\, dt\ge \frac{2\sqrt{2}}{K} 
\frac{\|f\|}{R_0^{3/2}}e_0^{1/2}-\frac{\sqrt{2}}{K}\frac{\|f\|}{R_0^{3/2}}e_0^{1/2}=\frac{\sqrt{2}}{K}\frac{\|f\|}{R_0^{3/2}}e_0^{1/2}.
\end{equation}

\medskip

Recall that $\sum_i\psi_i\le K_2$, therefore
\begin{equation}\label{ll1-p2}
\begin{aligned}
\left|\ \,\iint\limits_{[0,T]\times\Omega} (\eta\sum_i\psi_i)\, f\cdot u \, dxdt\,\right| &\le \iint\limits_{[0,T]\times\Omega} K_2\eta\, |f| |u|\,dxdt \le K_2{T^{\frac{1}{2}}}\,\|f\|\, \left(\int\limits_0^T\|u\|^2\,\eta^{\delta}\, dt\right)^{\frac{1}{2}}\\& = TR_0^3\,K_2 \sqrt{2}\frac{\|f\|}{R_0^{3/2}}e_0^{1/2}.
\end{aligned}
\end{equation}
Now use (\ref{ll1-p1}) to obtain

\[
\left|\ \,\iint\limits_{[0,T]\times\Omega} (\eta\sum_i\psi_i)\, f\cdot u \, dxdt\,\right| \le K_2 K \left(TR_0^3\, \frac{\sqrt{2}}{K}\frac{\|f\|}{R_0^{3/2}}e_0^{1/2}\right)=KK_2\int\limits_0^T \eta\,(f,u)\, dt\;.
\]
 
\end{proof}

\medskip

We are now ready to estimate the $\langle (f,u)\rangle_R$-term.

\begin{lem}\label{ll2}
Under the assumptions on Lemma \ref{ll1}, for any $(K_1,K_2)$-average on scale $R$ we have:
\begin{equation}\label{ll2-conc}
\left|\langle (f,u) \rangle_R\right|=
\left|\frac{1}{T}\frac{1}{R^3}\frac{1}{n}\sum\limits_{i=1}^{n}\iint (\eta\psi_i)\, f\cdot u\, dxdt\right|\le
KK_2\,  \frac{\tau_{-1}}{\tau_f}\,  \frac{R_0^3}{R^3}\,\frac{1}{n}\, \frac{\|f\|}{R_0^{3/2}}e_0^{1/2},
\end{equation}

where 

\begin{equation}\label{ll2-n1}
\tau_f=\frac{\|f\|}{\|A^{\frac{1}{2}}f\|}
\quad\mbox{and}\quad 
\tau_{-1}=\left(\frac{\frac{1}{T}\frac{1}{R_0^3}\int\limits_0^T\eta^2\,\|A^{-\frac{1}{2}}u\|^2\, dt}{e_0}\right)^{\frac{1}{2}}
\end{equation}

\end{lem}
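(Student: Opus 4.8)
The goal is to estimate $\langle (f,u)\rangle_R = \frac{1}{T R^3 n}\sum_i \iint (\eta\psi_i) f\cdot u$ from above, and the idea is to transfer it to the global-domain quantity controlled by Lemma \ref{ll1}. First I would observe that $\sum_i \psi_i \le K_2$ everywhere but also $\sum_i\psi_i \ge \psi_0 = 1$ (the global cut-off is $1$ in the periodic box); this sandwich is what makes the sum of localized integrals comparable to the single global integral. More precisely, I would start by pulling the sum inside:
\[
\left|\langle (f,u)\rangle_R\right| = \frac{1}{T R^3 n}\left|\iint \Bigl(\eta\sum_i\psi_i\Bigr) f\cdot u\right| \le \frac{1}{T R^3 n}\, KK_2 \int_0^T \eta\,(f,u)\,dt,
\]
where the inequality is exactly the conclusion \eqref{ll1-concl} of Lemma \ref{ll1}, whose hypotheses are assumed. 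This already converts everything to the one-dimensional time integral $\int_0^T \eta\,(f,u)\,dt$.

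\textbf{The spectral bound on the time integral.} The next step — and I expect this to be the crux — is to bound $\int_0^T \eta\,(f,u)\,dt$ by something of the form $TR_0^3 \cdot \frac{\tau_{-1}}{\tau_f}\cdot \frac{\|f\|}{R_0^{3/2}} e_0^{1/2}$, i.e. to gain the factor $\tau_{-1}/\tau_f$ over the crude Cauchy--Schwarz estimate $\int_0^T\eta\,(f,u) \lesssim TR_0^3 \frac{\|f\|}{R_0^{3/2}}e_0^{1/2}$. The way to do this is to split the $L^2$-pairing across the Stokes operator: write $(f,u) = (A^{1/2}f,\, A^{-1/2}u)$ (legitimate since $f\in D(A^{1/2})$ and both have zero mean), then apply Cauchy--Schwarz in $x$ to get $|(f,u)| \le \|A^{1/2}f\|\,\|A^{-1/2}u\|$, and finally Cauchy--Schwarz in $t$ with the weight $\eta$:
\[
\int_0^T \eta\,(f,u)\,dt \le \|A^{1/2}f\| \int_0^T \eta\,\|A^{-1/2}u\|\,dt \le \|A^{1/2}f\|\,\Bigl(\int_0^T\eta\,dt\Bigr)^{1/2}\Bigl(\int_0^T \eta\,\|A^{-1/2}u\|^2\,dt\Bigr)^{1/2}.
\]
Matching this against the definitions \eqref{ll2-n1} of $\tau_f = \|f\|/\|A^{1/2}f\|$ and $\tau_{-1}^2 = \bigl(\frac{1}{TR_0^3}\int_0^T \eta^2\|A^{-1/2}u\|^2\bigr)/e_0$ — up to bookkeeping about $\eta$ versus $\eta^2$ and the constant $c_\eta = \frac{1}{T}\int_0^T\eta \le 1$, which only help — gives $\int_0^T \eta\,(f,u)\,dt \le \|A^{1/2}f\|\,(TR_0^3)\,\tau_{-1}\, e_0^{1/2} = TR_0^3\,\frac{\tau_{-1}}{\tau_f}\,\frac{\|f\|}{R_0^{3/2}}\, e_0^{1/2}$, after writing $\|A^{1/2}f\| = \|f\|/\tau_f$.

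\textbf{Assembling.} Combining the two displays, the $TR_0^3$ cancels against the $1/(TR^3)$ prefactor leaving $R_0^3/R^3$, and we land exactly on \eqref{ll2-conc}:
\[
\left|\langle (f,u)\rangle_R\right| \le KK_2\,\frac{\tau_{-1}}{\tau_f}\,\frac{R_0^3}{R^3}\,\frac{1}{n}\,\frac{\|f\|}{R_0^{3/2}}\,e_0^{1/2}.
\]
The one genuine subtlety to watch is the discrepancy between the $\eta$ appearing in Lemma \ref{ll1} / the energy balance and the $\eta^2$ inside the definition of $\tau_{-1}$, together with the $\eta^\delta$ that shows up when one bounds $\int \eta^\delta\|u\|^2$ in terms of $e_0$; since $0\le\eta\le1$ and $\delta\in(1/2,1)$ these powers are all comparable and the substitution costs only absolute constants (which the statement absorbs), but the write-up should be careful to use the version of Cauchy--Schwarz in $t$ that produces precisely the weight occurring in \eqref{ll2-n1}. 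Everything else — the transfer via Lemma \ref{ll1}, the $A^{\pm 1/2}$ splitting, the final cancellation — is routine.
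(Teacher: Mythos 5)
Your proposal follows the paper's proof exactly: first transfer to the global time integral via Lemma \ref{ll1} (using $\sum_i\psi_i\le K_2$), then split $(f,u)=(A^{1/2}f,A^{-1/2}u)$, apply Cauchy--Schwarz in $x$ and then in $t$, and identify $\tau_f$ and $\tau_{-1}$. One small correction to your ``bookkeeping'' remark: your displayed time Cauchy--Schwarz produces the weight $\eta$ inside $\int\eta\,\|A^{-1/2}u\|^2$, and since $0\le\eta\le1$ we have $\int\eta^2\|A^{-1/2}u\|^2\le\int\eta\|A^{-1/2}u\|^2$, so replacing $\eta$ by the $\eta^2$ appearing in the definition of $\tau_{-1}$ goes in the \emph{wrong} direction for an upper bound (it does not ``only help''). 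The fix is the pairing the paper uses, $\int_0^T 1\cdot\bigl(\eta\|A^{-1/2}u\|\bigr)\le T^{1/2}\bigl(\int_0^T\eta^2\|A^{-1/2}u\|^2\bigr)^{1/2}$, which yields the weight $\eta^2$ directly and lands on the stated constant with no slack; with that substitution your argument is complete.
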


\begin{proof}
We start by noticing that 
\begin{align}
\label{ll2-p1}
&\left| \int\limits_0^T\eta\, (f,u)\, dt \right|\le
\left| \int\limits_0^T\eta\, (A^{1/2}f,A^{-1/2}u)\, dt \right|\le\int\limits_0^T\eta\, \|A^{1/2}f\|\,\|A^{-1/2}u\|\, dt \\
&\le T^{\frac{1}{2}} \|A^{\frac{1}{2}}f\|
\left( \int\limits_0^T\eta^2\,\|A^{-\frac{1}{2}}u\|^2\, dt \right)^{\frac{1}{2}}=T^{\frac{1}{2}}\, \frac{\tau_{-1}}{\tau_f} \|f\|\,\left( \int\limits_0^T\eta^{\delta}\,\frac{\|u\|^2}{2}\, dt \right)^{\frac{1}{2}}\nonumber.
\end{align}

Now, by Lemma \ref{ll1},

\[
\left|\langle (f,u) \rangle_R\right|=\left|\frac{1}{T}\frac{1}{R^3}\frac{1}{n}\iint (\eta\sum\limits_{i=1}^{n}\psi_i)\, f\cdot u\, dxdt\right|
\le \frac{1}{T}\frac{1}{R^3}\frac{1}{n}\,KK_2\int\limits_0^T\eta\,(f,u)\, dt,
\]
which, together with (\ref{ll2-p1}) implies (\ref{ll2-conc}).

\end{proof}

 Since $\tau_{-1}$ is the crucial scale in previous lemma, it is interesting to compare it to the Taylor scale $\tau_0$. 
 
\begin{lem}\label{lem2.0}
Assume $1/2<\delta<1$. Let 
\begin{equation}\label{better_tau2.0}
\tilde{\tau}_{-1}=\left(
\frac{\frac{1}{R_0^3}\frac{1}{T}\int\limits_0^T\eta^{2\delta-1}\|A^{-1/2}u\|^2}{e_0}
\right)^{\frac{1}{2}}
\end{equation}
then 
\begin{equation}\label{l2-conc}
\tilde{\tau}_{-1}\ge2\,\tau_0\;.
\end{equation}
\end{lem}

\begin{proof}

Note that we have:

\[2TR_0^3e_0=\int\limits_0^T \eta^{\delta}\|u\|^2=\int\limits_0^T (\eta^{\frac{1}{2}} A^{\frac{1}{2}}u, \eta^{\delta-\frac{1}{2}}A^{-\frac{1}{2}}u)\, dxdt\le
\left(\int\limits_0^T\eta\|A^{\frac{1}{2}}u\|^2\right)^{\frac{1}{2}}\left(\int\limits_0^T\eta^{2\delta-1}\|A^{-\frac{1}{2}}u\|^2\right)^{\frac{1}{2}}\]

and thus, (\ref{l2-conc}) follows (if we recall that $\tau_0=(\nu e_0/\varepsilon_0)^{1/2}$, and $\varepsilon_0$ is given by (\ref{periodic_global-enst})).

\end{proof}

{\rem\label{rem4.0}
{\rm Notice that $\tau_{-1}\le\tilde{\tau}_{-1}$. Thus, Lemma \ref{ll2} holds with $\tau_{-1}$ replaced by $\tilde{\tau}_{-1}$.
(Arguably, both $\tau_{-1}$ and $\tilde{\tau}_{-1}$ should in fact be of comparable size.) Also, (\ref{ll1-p1}) and (\ref{ll1-p2}) imply that the terms $\int\eta\,(f,u)\, dt$ and $\|f\|e_0^{1/2}$ have {\em the same scaling}: $\sim{\rm Gr}^{3/2}$. In particular this means that {\em the force, $f$, and the solution, $u$, are  aligned in $H$ over $[0,T]$}, since {\em the Cauchy-Schwarz inequality for $(f,u)$ is saturated}.
Therefore, the estimate $(\ref{ll2-conc})$ is sharp (as an order of ${\rm Gr}$), and, in order to control the force-related term in the energy balance, we will need to require that ``the bulk of oscillations'' of $f$ (characterized by the scale $\tau_f$) does not extend below the Taylor scale $\tau_0$.} }

\subsection{A better cascade theorem on $\Omega$}

Using the improved bound (\ref{ll2-conc}) instead of (\ref{force}), we obtain the following theorem.

\begin{thm}\label{better_periodic_casc-thm}
Let $C\in(0,2^{3/2})$ and $\alpha_0\in(0,1)$ be fixed. Assume $f\in D(A^{1/2})$ and $T>0$ be such that 

\begin{equation}\label{tt1-a1}
\|f\|\ge 2^{3}\, \frac{C_0^2K_2^2}{C^{4}\alpha_0^2}\,  \theta_f\, \frac{\nu^2}{R_0^{3/2}}\;
\end{equation}
and
\begin{equation}\label{tt1-a2}
T\ge \frac{2^{13/4} C_0}{\pi^2C^{3/2}}\frac{R_0^{2}}{\nu}.
\end{equation}

If $u\in\mathcal{A}_w$  is a suitable and Leray-Hopf solution that saturates Kolmogorov's dissipation law on the time interval $[0,T]$, i.e,
\begin{equation}\label{tt1-a3}
 \frac{C}{\theta_f}\,\frac{e_0^{3/2}}{R_0}\le\varepsilon_0\,(\le \frac{2\sqrt{2}}{\theta_f} \,\frac{e_0^{3/2}}{R_0})\;,
\end{equation}
and
\begin{equation}\label{tt1-a4}
\frac{2^{11/4}K_2}{\alpha_0C^{3/2}}\,\tau_{-1}\,\le\, \tau_f\;,
\end{equation}

then we will have energy cascade
\begin{equation}\label{tt1-conc1}
\frac{(1-\alpha_0)}{K_1}\, \varepsilon_0\,\le\, \langle \Phi\, \rangle_R \,\le\, K_2(1+\alpha_0) \,\varepsilon_0
\end{equation}
for any $(K_1,K_2)$-average on $\Omega$ for all $R$ inside the inertial range
\begin{equation}\label{tt1-conc2}
\left[
\beta_0\, \tau_{0},\; R_0
\right], \quad  \mbox{where}\quad\beta_0:=\left(\frac{2}{\alpha_0}C_0K_2\right)^{1/2}\;.
\end{equation}
Moreover, in this case
\begin{equation}\label{tt1-conc3}
\frac{C^{3/2}}{2^{15/4}} \,\theta_f^{1/2}\, \frac{\nu R_0^{5/4}}{\|f\|^{1/2}}\, \le\, \tau_0^2\,\le\, \frac{2^{3/2}}{C^2} \,\theta_f^{1/2}\, \frac{\nu R_0^{5/4}}{\|f\|^{1/2}}
\end{equation}

\end{thm}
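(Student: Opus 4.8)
\textbf{Proof proposal for Theorem \ref{better_periodic_casc-thm}.}

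The plan is to reprove the averaged energy balance exactly as in Section \ref{sec1}, but with the crude bound (\ref{force}) on $\langle(f,u)\rangle_R$ replaced by the sharp bound (\ref{ll2-conc}) from Lemma \ref{ll2}. First I would check that the hypotheses of Lemmas \ref{ll1}--\ref{ll2} are in force: condition (\ref{ll1-a1}) with the constant $K=2\sqrt2/C$ is precisely the left inequality in (\ref{tt1-a3}) after dividing by $\varepsilon_0$ and using (\ref{lkd1}); and the time condition (\ref{ll1-a2}) with that same $K$ reads $T\ge (8\sqrt2 C_0/\pi^2 C)R_0^2/\nu$, which is implied by (\ref{tt1-a2}) since $2^{13/4}/C^{3/2}\ge 8\sqrt2/C$ on $C\in(0,2^{3/2})$. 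I also need to verify the hypotheses of Theorem \ref{apriori_bds} and Theorem \ref{scaling} (so that $\varepsilon_0$, $e_0$, and $\tau_0$ are all controlled two-sidedly): (\ref{ass3}) requires $\|f\|\ge\sigma_f$ and $T\ge c_1 R_0^2/\nu$, the first of which will follow from (\ref{tt1-a1}) for the stated absolute constants (here one uses the definition of $\sigma_f$ and $\theta_f$ together with $\gamma_f\le 3$, itself a consequence of (\ref{tt1-a2})), and the second is again implied by (\ref{tt1-a2}).

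Next, starting from the averaged local energy balance (\ref{loc_en_bal0}) (summed and divided by $n$), I would keep the estimate (\ref{en1})--(\ref{en2}) for the $(\partial_t+\nu\Delta)\phi_i$ term verbatim, contributing the term $2C_0K_2(\nu/R^2)\frac1n(R_0/R)^3 e_0$, and replace the force term by Lemma \ref{ll2}: $|\langle(f,u)\rangle_R|\le KK_2\,\frac{\tau_{-1}}{\tau_f}\,\frac{R_0^3}{R^3}\,\frac1n\,\frac{\|f\|}{R_0^{3/2}}e_0^{1/2}$ with $K=2\sqrt2/C$. Then I would divide through by $\varepsilon_0$ and absorb: using (\ref{lkd1}), $\frac{\|f\|}{R_0^{3/2}}e_0^{1/2}\le \varepsilon_0$ up to the constant $2\sqrt2$, so the force contribution is $\lesssim \frac{\tau_{-1}}{\tau_f}\,\varepsilon_0$ up to an absolute constant times $K_2/C$; condition (\ref{tt1-a4}) (whose constant $2^{11/4}K_2/(\alpha_0 C^{3/2})$ is chosen exactly to dominate this) forces this contribution to be $\le \frac{\alpha_0}{2}\varepsilon_0$. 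For the viscous-correction term, $2C_0K_2\nu e_0/(R^2\varepsilon_0)=2C_0K_2 \tau_0^2/R^2$, which is $\le \frac{\alpha_0}{2}$ precisely when $R\ge\beta_0\tau_0$ with $\beta_0=(2C_0K_2/\alpha_0)^{1/2}$ — this is the inertial range (\ref{tt1-conc2}). Combining, and using (\ref{R-n}) to pass between $\frac1n(R_0/R)^3$ and the constants $1/K_1,1$, gives $\frac{1-\alpha_0}{K_1}\varepsilon_0\le\langle\Phi\rangle_R\le K_2(1+\alpha_0)\varepsilon_0$, which is (\ref{tt1-conc1}); one must also check $\beta_0\tau_0\le R_0$, i.e.\ $\alpha_0\tau_0^2/R_0^2\le 1/(2C_0K_2)$ — but this follows from (\ref{tt1-conc3}) together with (\ref{tt1-a1}), so (\ref{tt1-conc3}) should be established first.

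To obtain (\ref{tt1-conc3}) I would combine the two-sided scaling laws of Theorem \ref{scaling}. From $\tau_0^2=\nu e_0/\varepsilon_0$, the upper bound on $e_0$ in (\ref{scal2}) and the lower bound on $\varepsilon_0$ in (\ref{scal3}) (both with $K=C/\theta_f$, so $\theta_f$ cancels favorably) give $\tau_0^2\le \nu\cdot\frac{2\sqrt2}{K}\frac{\|f\|}{R_0^{1/2}}\cdot\frac{1}{K\theta_f^{3/2}}\frac{R_0^{7/4}}{\|f\|^{3/2}} = \frac{2\sqrt2}{C^2}\theta_f^{1/2}\frac{\nu R_0^{5/4}}{\|f\|^{1/2}}$, which is the right inequality in (\ref{tt1-conc3}); symmetrically, the lower bound on $e_0$ and the upper bound on $\varepsilon_0$ give the left inequality, with the constant $C^{3/2}/2^{15/4}$ after collecting the powers of $8$.

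The step I expect to be the main obstacle is the bookkeeping of the absolute constants: one must choose $K=2\sqrt2/C$ consistently across Lemmas \ref{ll1}--\ref{ll2}, propagate it through the quadratic-type absorption of the force term, and verify that the somewhat baroque constants in (\ref{tt1-a1}), (\ref{tt1-a2}), (\ref{tt1-a4}) are exactly what is needed for each of the three separate smallness requirements (force term $\le\frac{\alpha_0}{2}\varepsilon_0$; $\gamma_f\le 3$ and $\|f\|\ge\sigma_f$ for Theorem \ref{apriori_bds}; and $\beta_0\tau_0\le R_0$). None of these is deep, but the chain of implications among (\ref{tt1-a1})--(\ref{tt1-a4}), (\ref{tt1-conc3}), and the hypotheses of Theorems \ref{apriori_bds}--\ref{scaling} must be arranged in the right order, since (\ref{tt1-conc3}) is used to close the inertial-range nonemptiness, while (\ref{tt1-a1}) is used to establish (\ref{tt1-conc3}).
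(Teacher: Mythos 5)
Your proposal reproduces the paper's proof essentially step for step: verify the hypotheses of Lemmas \ref{ll1}--\ref{ll2} from (\ref{tt1-a3}) and (\ref{tt1-a2}) via Theorems \ref{apriori_bds} and \ref{scaling}, rerun the averaged energy balance of Section \ref{sec1} with (\ref{ll2-conc}) replacing (\ref{force}), absorb the force and viscous corrections into $\tfrac{\alpha_0}{2}\varepsilon_0$ each (the former via (\ref{tt1-a4}), the latter via the inertial-range restriction $R\ge\beta_0\tau_0$), and establish (\ref{tt1-conc3}) from the two-sided scaling of Remark \ref{rr2} so that (\ref{tt1-a1}) guarantees $\beta_0\tau_0\le R_0$ --- exactly the paper's route, and your derivation of (\ref{tt1-conc3}) with the constants $C^{3/2}/2^{15/4}$ and $2^{3/2}/C^2$ is correct. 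Two bookkeeping slips, neither fatal: the bounds you need on $\|f\|e_0^{1/2}/R_0^{3/2}$ in terms of $\varepsilon_0$ go in the direction supplied by (\ref{lkd2}) (equivalently, by (\ref{ll1-a1}) itself once established), not by (\ref{lkd1}), which points the wrong way; and the claimed inequality $2^{13/4}/C^{3/2}\ge 8\sqrt2/C$ fails for $C>2^{-1/2}$, so your verification of (\ref{ll1-a2}) does not cover the whole range $C\in(0,2^{3/2})$ --- though the paper's own constant tracking in this proof (e.g.\ its stated $K=\tfrac{1}{2^{3/4}C^{3/2}}$, which violates $K>1$ on the same range) is no more consistent.
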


\begin{proof}
First, note that by Theorem \ref{scaling}, the assumptions (\ref{tt1-a3}) and (\ref{tt1-a2}) imply (\ref{ll1-a1}) and (\ref{ll1-a2}) with 
\[K=\frac{1}{2^{3/4}C^{3/2}}.\] 
Therefore, the estimate from Lemma \ref{ll2} holds.

We proceed as in the proof of Theorem \ref{casc-thm}, but use (\ref{ll2-conc}) instead of (\ref{force}) to arrive to the following version of (\ref{ava_bounds}):

\begin{equation}\label{tt1_p1}
\begin{aligned}
&\frac{1}{n}\left(\frac{R_0}{R}\right)^3 \left[\varepsilon_0 - C_0K_2\frac{\nu}{R^2}\, e_0 - 2 K K_2\,\frac{\tau_{-1}}{\tau_f}\, \frac{\| f \|^2}{R_0^{3/2}}\, e_0^{1/2} \right]
\le \langle \Phi\, \rangle_R \le\\&\le
 K_2\, \frac{1}{n}\left(\frac{R_0}{R}\right)^3\left[
\varepsilon_0 + C_0\frac{\nu}{R^2}\, e_0 +  2 K \,\frac{\tau_{-1}}{\tau_f}\,\frac{\| f \|^2}{R_0^{3/2}}\, e_0^{1/2}\right]\:, 
\end{aligned}
\end{equation}

Now use (\ref{ll1-a1}) to bound $\frac{\| f \|^2}{R_0^{3/2}}\, e_0^{1/2}$, then apply  (\ref{R-n}) and factor out $\varepsilon_0$:

\[
\frac{1}{K_1}\, \varepsilon_0 \,\left( 1- C_0K_2\frac{\tau_0^2}{R^2}-2^{1/2}K^2K_2\frac{\tau_{-1}}{\tau_f}\right)
\le \langle \Phi\, \rangle_R \le
K_2 \varepsilon_0 \,\left( 1- C_0\frac{\tau_0^2}{R^2}-2^{1/2}K^2\frac{\tau_{-1}}{\tau_f}\right)
\]
Notice, that if 
\begin{equation}\label{tt1-p3}
C_0K_2\frac{\tau_0^2}{R_0^2}\le \frac{\alpha_0}{2} 
\end{equation}
and
\begin{equation}\label{tt1-p4}
2^{1/2}K^2K_2\frac{\tau_{-1}}{\tau_f}\le\frac{\alpha_0}{2}, 
\end{equation}
then (\ref{tt1-conc1}) hold for $R$ inside the interval given in (\ref{tt1-conc2}).

Obviously, (\ref{tt1-p4}) is equivalent to (\ref{tt1-a4}), as for (\ref{tt1-p3}), it indeed holds if the upper bound of (\ref{tt1-conc3}) and (\ref{tt1-a1}) hold.

It remains to prove (\ref{tt1-conc3}) holds. But it immediately follows from the bounds on $e_0$ and $\varepsilon_0$ from Remark \ref{rr2}.

\end{proof}

As was mentioned in Remark \ref{rem4.0}, the condition (\ref{tt1-a4}) is in fact a rather mild one. It states that the "force scale" $\tau_f$ is not too small, i.e. if $f$ acts mostly on the big scale. Thus, essentially, the above theorem establishes that Kolmogorov's dissipation law will trigger energy cascade in physical scales inside a periodic domain, provided the force is mild enough (i.e. the force scale $\tau_f$ dominates the scale
$\tau_{-1}$). Theorem \ref{scaling} (more precisely Remark \ref{rr2}) implies that, in terms of Grashof number (\ref{Gr-per}),  the width of the inertial range (in reciprocals of the length scales) is of order $\mathrm{Gr}^{1/4}$, thus the cascades are wider for higher magnitude forces. Finally, by the same theorem, the Reynolds number,  $\mathrm{Re}=e_0^{1/2}/(\nu R_0^{-1})$, grows like $\mathrm{Gr}^{1/2}$, and so our results prove that Kolmogorov's dissipation law and high Reynolds numbers imply wide energy cascades in the physical scales of periodic flows.

\medskip

We conclude with several remarks.

{\rem
{\rm
Comparing Theorems \ref{casc-thm} with \ref{better_periodic_casc-thm} we notice that in the periodic case, saturation of the Kolmogorov dissipation law (\ref{tt1-a3}) is equivalent to the saturation of the a priori bound (\ref{Eef0}) (see Remark \ref{rem4}.) Therefore (\ref{tt1-a3}) corresponds to the condition (\ref{ass1}) in Theorem \ref{casc-thm}. Thus, philosophically, Theorem \ref{casc-thm} states that in generic settings, saturation of Kolmogorov dissipation law plus small Taylor scale imply energy cascade. Theorem \ref{better_periodic_casc-thm} states that in the case of the whole domain in periodic case, if force is big enough, Kolmogorov law in fact {\em implies} that Taylor scale is small and is alone responsible for the cascade.
}}

{\rem
{\rm Overall the results of Section \ref{periodic_casc}, obtained for the physical scales, mirror  closely the ones obtained in \cite{DFJ09} in the framework of Fourier scales and statistical solutions (see Remark \ref{rem04}). Recall, \cite{DFJ09} shows that Kolmogorov's dissipation law (defined there in terms of generalized limits) induces {\em the same} restrictive scaling on energy and its dissipation rate as the one obtained here. Recall that, as noted in Remark \ref{rem04}, in the context of Fourier scales, as $\mathrm{Gr}\to\infty$, this scaling combined with the cascade condition from \cite{FMRT01a}, {\em automatically} implies energy cascade up to the Taylor scale.
}}

\medskip

{\rem{\rm We should stress that Kolmogorov's dissipation law (\ref{tt1-a3}) is responsible for the peculiar alignment of $f$ and $u$, which makes the estimate (\ref{ll2-conc}) possible. In general case, in absence of additional information on alignment of $f$ and $u$, it is impossible to a priori eliminate the case when the Cauchy-Schwarz inequality $(f,u)\le\|f\|\,\|u\|$ saturated on the interval $[0,T]$ even if one assumes $\tau_0\ll \|f\|/\|A^{1/2}f\|$ (which would mean that $f$ is geometrically very different from $u$.)

\medskip

For example, one can construct vectors in $H$: $u=v_1+\alpha v_n$ and $f=v_1$, where $\{v_i\}$ is an orthonormal basis of eigenvectors of $A$, arranged in the increasing order of eigenvalues. If $\alpha=\lambda_n^{-1/4}$ ($\lambda_n$ is an eigenvalue corresponding to $v_n$) an $n\gg 1$, then we will in fact have $(f,u)\approx \|f\|\,\|u\|$, while $"\tau_0"=\|u\|/\|A^{1/2}u\|\ll\|f\|/\|A^{1/2}f\|$. If something similar happens for an actual solution of the NSE on $[0,T]$, then (\ref{force}) is the best estimate available, which, as we noticed in Remark \ref{rem1}, will not be useful in establishing cascade on the whole $\Omega$. 
}
}

\medskip

{\rem
{\rm
Notice that because of periodicity, the global flux on $\Omega$ is zero: 
\[\Phi_{\Omega}=\frac{1}{T}\frac{1}{R_0^3}\int\limits_0^{T} \eta\,(B(u,u),u)\, dt=0\;.\]
Therefore, the flux density $(u\cdot\nabla u) u\cdot u +\nabla p$ {\em must be a sign-varying quantity}. The fact that {\em all} the $(K_1,K_2)$-averages  across a wide range of scales are comparable to $\varepsilon_0$ is therefore truly significant.
}}


\section[A]{Appendix - the link between suitable and Leray-Hopf solutions in the periodic case.}\label{appx}

As mentioned above, in the space-periodic settings the suitable weak solutions of the NSE will in fact also be the Leray-Hopf solutions with a slightly modified global energy inequality (the inequality is localized in time, in contrast to the classical Leray-Hopf case). For completeness, we present the poof of this fact. It is worth noticing, that the converse statement is an open problem, linked with the general regularity problem of the NSE. 

\begin{thm}\label{siut->leray}
Let $u,p$ be a suitable solution on $[0,T]$ to the space-periodic  NSE. Then $u$ will satisfy the NSE in $V'$ (i.e. $u$ is a weak solution -- see (\ref{weak_form})) and for any time cut-off $\eta\in C^1_0([0,T])$ the global energy inequality will hold:
\begin{equation}\label{en_ineq1}
\nu \int_0^T\eta\, \|\nabla u\|^2\le \int_{0}^T\eta_t \frac{\|u\|^2}{2} +\int_0^T\eta\,(f,u)\;.
\end{equation}
\end{thm}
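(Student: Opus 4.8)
The plan is to start from the local energy inequality (\ref{ener_ineq}) and recover the global (time-localized) energy inequality (\ref{en_ineq1}) by choosing test functions $\phi(t,x)$ that are spatially constant, thereby killing the spatial-cutoff error terms that made the local inequality only local. Concretely, in the periodic setting one may take test functions of the product form $\phi(t,x) = \eta(t)\,\chi(x)$ where $\chi$ is a smooth spatial bump, and then let $\chi \nearrow 1$ on the whole torus. Since $\Omega = [0,L]^3$ is compact with no boundary, one can in fact work directly with $\phi(t,x) = \eta(t)$ (formally $\chi \equiv 1$), but to stay within the class of admissible test functions for (\ref{ener_ineq}) one should approximate by genuine $C_0^\infty$ functions and pass to the limit; the pressure term $\iint p\, u\cdot\nabla\phi$ and the transport term $\iint \frac{|u|^2}{2}u\cdot\nabla\phi$ both vanish in the limit because $\nabla\phi = \eta\,\nabla\chi \to 0$ (this uses $u\in L^3_{loc}$ and $p\in L^{3/2}_{loc}$, which hold for suitable solutions, to control the limiting integrals), and the term $\iint \frac{|u|^2}{2}\nu\Delta\phi$ likewise vanishes since $\Delta\phi = \eta\,\Delta\chi \to 0$. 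What survives is exactly
\[
0 \ge \nu\iint |\nabla u|^2\,\eta - \iint \frac{|u|^2}{2}\,\eta_t - \iint f\cdot u\,\eta,
\]
which rearranges to (\ref{en_ineq1}). One has to be slightly careful that the anomalous/defect part does not have the wrong sign — but the local energy inequality is stated with the defect measure already absorbed so that the inequality goes the "dissipative" way, and integrating (summing) preserves that direction.

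The second assertion — that $u$ is a weak solution in the sense of (\ref{weak_form}) — should be handled separately and is essentially a matter of unpacking definitions: a suitable weak solution is by definition a distributional solution of (\ref{NSE}), so testing (\ref{NSE}) against $v\in V$ (extended trivially in time, or against $\varphi(t)v$ with $\varphi\in C_0^\infty$) and using the divergence-free condition together with $\nabla\cdot v = 0$ to eliminate the pressure gives (\ref{weak_form}) directly. The regularity $u\in L^\infty_t H\cap L^2_{t,loc}V$ needed to make sense of the terms follows from the suitability hypotheses (finite energy and finite dissipation). One should also record that $t\mapsto \|u(t)\|^2$ has a representative that is, say, lower semicontinuous (or that the energy inequality holds for a.e. $t_0$ and all $t_1$), so that (\ref{en_ineq1}) can be read both in the integrated-against-$\eta$ form and, if desired, in the more classical form (\ref{leray_en_ineq}); this is a standard consequence of $u_t \in L^{4/3}_t V'$ plus $u\in L^2_tV$.

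The main obstacle I anticipate is the rigorous justification of the limit $\chi\nearrow 1$ inside (\ref{ener_ineq}) — specifically, showing that $\iint (\frac{|u|^2}{2}+p)\,u\cdot\nabla\phi \to 0$. This requires quantitative control: choosing $\chi = \chi_m$ with $|\nabla\chi_m|\lesssim 1/\ell_m$ supported in a shell of width $\ell_m$, one estimates $|\iint(\frac{|u|^2}{2}+p)\,u\cdot\nabla\phi| \lesssim \frac{1}{\ell_m}\iint_{\text{shell}\times[0,T]} (|u|^3 + |p|\,|u|)$, and one needs the shell integrals of $|u|^3$ and $|p||u|$ to be $o(\ell_m)$. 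For generic suitable solutions $|u|^3\in L^1_{t,x}$ and $|p||u|\in L^1_{t,x}$, so the shell integrals go to $0$, but not necessarily faster than $\ell_m$. This is circumvented by periodicity and translation invariance: one averages over the location of the shell (or uses a Fubini/layer-cake argument over a family of shells at different radii), so that for a suitable choice of shell the integral is $\lesssim \ell_m \cdot (\text{total }L^1\text{ norm})$, making the whole term $O(1)$ — and then one refines by taking the shell to also thin out, or more cleanly, uses that on the torus one can take $\nabla\chi_m$ supported on a set of measure $\to 0$ while keeping $\|\nabla\chi_m\|_{L^\infty}$ bounded by tiling, forcing the integral to $0$ by absolute continuity of the integral. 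Sorting out this limiting argument cleanly — i.e., exhibiting an explicit sequence of admissible spatial cutoffs for which every error term provably vanishes — is the technical heart of the proof; everything else is bookkeeping.
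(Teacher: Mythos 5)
Your overall strategy (test the local energy inequality against spatially trivial cutoffs to kill the flux and pressure terms, then handle the $V'$ formulation by density) is the right one, and your treatment of the weak-formulation half is essentially what the paper does. But the limiting argument $\chi_m\nearrow 1$, which you correctly identify as the technical heart, has a genuine gap that your proposed fixes do not close. On the torus a single admissible cutoff must vanish somewhere on (a neighbourhood of) the boundary of the fundamental domain and equal $1$ on most of it, so by the mean value theorem $\|\nabla\chi_m\|_{L^\infty}\gtrsim 1/\ell_m$ whenever the transition shell has width $\ell_m$; your suggestion of keeping $\|\nabla\chi_m\|_{L^\infty}$ bounded while shrinking the support of $\nabla\chi_m$ is therefore impossible. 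The shell-averaging/Fubini argument, as you yourself note, only shows that for a good choice of shell the error term $\frac{1}{\ell_m}\iint_{\mathrm{shell}}(|u|^3+|p||u|)$ is $O(1)$, not $o(1)$, and for a generic suitable solution (where $|u|^3$ and $|p||u|$ are merely $L^1$) there is no reason the shell integrals decay faster than $\ell_m$. So the argument as written does not prove that the flux and pressure terms vanish in the limit.

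The device that closes this — and it is what the paper uses — is an \emph{exact} periodic partition of unity rather than a limit: finitely many smooth $L$-periodic functions $\psi_j\ge 0$ with $\sum_j\psi_j\equiv 1$ on $\mathbb{R}^3$, each $\psi_j$ compactly supported in the interior of some translated period box $\Omega_j$ (built by taking products of one-dimensional functions $f_{i_1}(x_1)\cdots f_{i_n}(x_n)$ with $f_1+f_2=1$, $f_1$ supported in the interior of $[0,L]$ and $f_2=1-f_1$ supported in the interior of the shifted box $[-L/2,L/2]$). Each $\phi_j=\eta\psi_j$ is an admissible test function for (\ref{ener_ineq}) on $\Omega_j$; by periodicity every integral over $\Omega_j$ equals the corresponding integral over $\Omega$, and summing the $n$ local inequalities the cutoff-derivative terms cancel \emph{identically}, since $\sum_j\nabla\psi_j=\nabla(1)=0$ and $\sum_j\Delta\psi_j=0$, leaving exactly (\ref{en_ineq1}). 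No limiting procedure, no control on shell integrals, and no sign issue with the defect measure is needed — the cancellation happens term by term on the left-hand side before any estimate is made. In effect, the "complementary cutoff" $1-\chi$ that you would need to absorb the transition region is precisely the second member of this partition of unity; once you include it, your approach becomes the paper's.
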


\begin{proof}
Let $L>0$ be the period, and $\Omega=[0,L]^3$ be a fixed box of size $L$. We start with the following claim.

\begin{claim}
There exists a finite family of functions $\{\psi_j\}$ such that:
\begin{enumerate}
\item Each $\psi_j\in C^{\infty}(\mathbb{R}^n)$, $\psi_j$ are $L$-periodic, $1\ge\psi_j\ge0$.
\item For each $\psi_j$ there exists a size $L$ box $\Omega_j\subset\mathbb{R}^n$, such that the restriction of $\psi_j$ on the {\em interior} 
of $\Omega_j$ has a compact support.
\item $\sum \psi_j = 1$ on $\mathbb{R}^n$.
\end{enumerate}
\end{claim}

\begin{proof} (of the Claim) 
Let $f_1\in C^{\infty}(\mathbb{R})$ be an $L$-periodic function, $0\le f_1 \le1$, such that $f_1=1$ on $[(1/3)L,(2/3)L]$  and $f_1=0$ on $[0,(1/6)L]\cup[(5/6)L,1]$. Now $f_2=1-f_1$ will also be a $C^{\infty}$, nonnegative, $L$-periodic function restriction of which on the interior of (the 1-dimensional periodic box) $[-L/2,L/2]$  is compactly supported. Moreover, $f_1+f_2=1$ on $\mathbb{R}$.

In $\mathbb{R}^n$ we have 
\[1=\prod\limits_{k=1}^n(f_1(x_k)+f_2(x_k))=\sum\limits_{i_1,\dots,i_n=1}^2 f_{i_1}(x_1)\cdot\ldots\cdot f_{i_n}(x_n)=
\sum\limits_{i_1,\dots,i_n=1}^2 \psi_{i_1,\dots,i_n}(x)\,
,\]
where $\psi_{i_1,\dots,i_n}(x_1,\ldots,x_n)=f_{i_1}(x_1)\cdot\ldots\cdot f_{i_n}(x_n)$ satisfy the the conditions (1)-(3) (after a re-indexing).
\end{proof}

Returning to the proof of the theorem, let $\psi_j$ be the test functions satisfying (1)-(3) in Claim 1. Then on for all $j$ the local energy inequalities are satisfied:
\begin{equation}\label{en_ineq2}
\int\limits_0^T\int\limits_{\Omega_j}\left(\frac{|u|^2}{2}+p\right)\, u\cdot\nabla\phi_j\, \ge\,
\nu \int\limits_0^T\int\limits_{\Omega_j} |\nabla u|^2\phi_j\, -\, \int\limits_0^T\int\limits_{\Omega_j}\frac{|u|^2}{2}(\partial_t\phi_j-\nu\Delta\phi_ij) \,-\, \int\limits_0^T\int\limits_{\Omega_j} (f,u)\phi_j\;,
\end{equation}
where $\phi_j=\eta\psi_j$ and $\Omega_j$ from the Claim 1, (2). But since all the functions involved are $L$-periodic in $x$, the integrals over $\Omega_j$ equal to the integrals over $\Omega$ a.e. in $t$. Thus, writing $\Omega$ instead of $\Omega_j$ in (\ref{en_ineq2}) and summing up in $j$ (keeping in mind $\sum \phi_j=\sum \eta\psi_j=\eta$), we obtain (\ref{en_ineq1}).

It remains to show $u$ solves the NSE in $V'$. But since $u$ is a suitable weak solution, then $u\in L^2([0,T],V)$ and solves the NSE in the sense of distributions:
\begin{equation}\label{distrib_nse}
-\int\limits_0^T\int\limits_{\mathbb{R}^3} u\cdot \phi_t - \int\limits_0^T\int\limits_{\mathbb{R}^3} (\nabla u:\nabla\phi)-\int\limits_0^T\int\limits_{\mathbb{R}^3} (u\cdot\nabla)\phi\cdot u= \int\limits_0^T\int\limits_{\mathbb{R}^3} f\cdot\phi\,,
\end{equation}
for any divergence-free $\phi\in C_0^{\infty}(\mathbb{R}\times \mathbb{R}^3, \mathbb{R}^3)$. Note that since $u$, $p$, and $f$ are space periodic, then by extending $C_0^{\infty}$ test functions with the support inside an $L$-periodic box to the whole $\mathbb{R}^3$ (and considering sums of such functions, like in the proof of Claim 1), we can see that (\ref{distrib_nse}) also holds for $\phi$ that are $C^{\infty}$ and $L$-periodic in the space variable, integrated over $\Omega$.

But the density properties of $C^{\infty}$ in $H^1$ imply that
for any $v\in V$ there exists a sequence of divergence free space periodic $C^{\infty}$-functions,  $\{\psi_k\}$ such that $\psi_k\to v$ in $H^1(\Omega,\mathbb{R}^3)$. Then for $\phi_k=\eta\psi_k$, distributional pairing above implies, as $k\to\infty$, the duality pairing of the NSE with $v$ in $V'$:
\begin{equation}\label{distr_nse}
-\int\limits_0^T\int\limits_{\Omega} \eta_t\, u\cdot v\,- \int\limits_0^T\int\limits_{\Omega} \eta\,(\nabla u:\nabla v)-\int\limits_0^T\int\limits_{\Omega} \eta\,(u\cdot\nabla)v\cdot u= \int\limits_0^T\int\limits_{\Omega} \eta\,f\cdot v\,.
\end{equation}
This means that for any $v\in V$, in the sense of distributions,
\[F_t=G,\]
where, using the notation from Subsection \ref{per-sec},  $F=(u,v)$, $G=(B(u,v),u)-(A^{1/2}u,A^{1/2}v)+(f,v)$. Thus,
(\ref{weak_form}) is satisfied,
i.e, the suitable weak solution $u$ satisfies the NSE in $V'$.

\end{proof}

\begin{rem}{\rm
Notice that by (\ref{distr_nse}), both $F$ and $G$ are locally $L^1$ in time ($F$, $G$ are form the proof of Theorem \ref{siut->leray}.) Therefore, $F$ is absolutely continuous and $F_t=G$ a.e., in other words, (\ref{weak_form}) {\em holds  a.e. in} $t$.
}\end{rem}

\medskip

The next result is an elementary observation on adapting the Leray-Hopf energy inequality to the localized in time case. 

\begin{thm}\label{str_en_ineq}
Assume $u$ is the Leray-Hopf solution of the space-periodic NSE, and let $\eta\in C^{2}([t_0,t_1],\mathbb{R}_+)$, where $t_0$ is the Lebesgue point of $\|u(\cdot)\|^2$, and $t_1>t_0$. Then,
\begin{equation}\label{LH-gen}
\eta(t_1)\frac{\|u(t_1)\|^2}{2}-\eta(t_0)\frac{\|u(t_0)\|^2}{2} - \int\limits_{t_0}^{t_1}\eta_t\frac{\|u\|^2}{2}\le - \nu\int\limits_{t_0}^{t_1} \eta\,\|\nabla u\|^2 + \int\limits_{t_0}^{t_1} \eta\, (f,u)\;.
\end{equation}

\end{thm}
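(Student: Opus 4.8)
The plan is to obtain (\ref{LH-gen}) from the two-point Leray--Hopf energy inequality (\ref{leray_en_ineq}) by ``testing'' it against the nonnegative weight $\eta$ and integrating by parts. Write $E(t)=\tfrac12\|u(t)\|^2$ (well defined for every $t$, since a Leray--Hopf solution has a representative that is weakly continuous into $H$, so $\|u(\cdot)\|$ is weakly lower semicontinuous and genuinely pointwise defined), $\Delta(t)=\nu\|\nabla u(t)\|^2\in L^1_{loc}$, $\Pi(t)=(f(t),u(t))\in L^\infty_{loc}$, and set $G(t)=E(t)+\int_0^t(\Delta-\Pi)\,d\tau$. Then (\ref{leray_en_ineq}) says exactly that $G(t)\le G(s)$ for every $s$ in a full-measure set $\mathcal S$ and every $t\ge s$. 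Since $t_0$ is a Lebesgue point of $\|u(\cdot)\|^2$, the standard argument (take $s\to t_0^-$ through $\mathcal S$ and use the Lebesgue-point property together with weak lower semicontinuity) shows that the inequality is available with initial time $t_0$, i.e.\ $t_0\in\mathcal S$; the final time $t_1$ is arbitrary by the ``for all $t_1\ge t_0$'' clause of (\ref{leray_en_ineq}).

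First I would note that $G$ restricted to $\mathcal S$ is nonincreasing, hence $G$ agrees a.e.\ on $[t_0,t_1]$ with a genuinely nonincreasing function $\bar G$ satisfying $\bar G(t_0)=G(t_0)$ and $\bar G(t_1^-)\ge G(t_1)$ (the latter by letting $s\to t_1^-$ through $\mathcal S$ in $G(t_1)\le G(s)$). Equivalently, $E$ admits a representative of bounded variation $E_\ast=\bar G-\int_0^{\cdot}(\Delta-\Pi)$ on $[t_0,t_1]$, for which $dE_\ast=d\bar G+(\Pi-\Delta)\,dt$ with $d\bar G\le 0$ as a signed measure, $E_\ast=E$ a.e., $E_\ast(t_0)=E(t_0)$, and $E(t_1)\le E_\ast(t_1^-)$.

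Next, because $\eta\ge 0$ we have $\int_{[t_0,t_1]}\eta\,d\bar G\le 0$, and Lebesgue--Stieltjes integration by parts (with $\eta\in C^2$ continuous) gives
\[
\int_{[t_0,t_1]}\eta\,dE_\ast=\eta(t_1)E_\ast(t_1^-)-\eta(t_0)E_\ast(t_0)-\int_{t_0}^{t_1}\eta_t\,E_\ast\,dt .
\]
Subtracting $\int_{t_0}^{t_1}\eta(\Pi-\Delta)\,dt$ and using $dE_\ast=d\bar G+(\Pi-\Delta)dt$, the left side becomes $\int_{[t_0,t_1]}\eta\,d\bar G\le 0$; then replacing $E_\ast$ by $E$ is legitimate since $E_\ast=E$ a.e.\ (integral term unchanged), $E_\ast(t_0)=E(t_0)$, and $\eta(t_1)E_\ast(t_1^-)\ge\eta(t_1)E(t_1)$ (as $\eta(t_1)\ge0$). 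Rearranging and substituting $E=\tfrac12\|u\|^2$, $\Pi=(f,u)$, $\Delta=\nu\|\nabla u\|^2$ yields exactly (\ref{LH-gen}).

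I expect the main obstacle to be the endpoint bookkeeping: one must check that the pointwise value $\tfrac12\|u(t_0)\|^2$ appearing in (\ref{LH-gen}) really equals $\bar G(t_0)+\int_0^{t_0}(\Pi-\Delta)$ (this is precisely where the Lebesgue-point hypothesis on $t_0$ enters), and that at $t_1$ the possible downward jump of $E$, encoded by $E(t_1)\le E_\ast(t_1^-)$, is oriented consistently with the nonnegative weight $\eta$. A more elementary alternative, avoiding Stieltjes calculus, is to apply (\ref{leray_en_ineq}) on each subinterval of a partition $t_0=s_0<\dots<s_N=t_1$ with $s_1,\dots,s_{N-1}\in\mathcal S$, multiply the $j$-th inequality by $\eta(s_j)\ge0$, sum, use summation by parts, and let the mesh tend to $0$; there the delicate point is convergence of $\sum_j(\eta(s_j)-\eta(s_{j-1}))E(s_j)$ to $\int_{t_0}^{t_1}\eta_t E\,dt$, handled by choosing the $s_j$ among Lebesgue points of $E$ and using $\eta_t\in C^1$, $E\in L^\infty$.
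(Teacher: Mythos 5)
Your argument is correct, but your primary route is genuinely different from the paper's. The paper proves (\ref{LH-gen}) by the elementary discretization you only sketch as an ``alternative'': it picks a partition $t_0=s_0<\dots<s_{n_\delta}=t_1$ of Lebesgue points, applies the two-point inequality (\ref{leray_en_ineq}) on each $[s_j,s_{j+1}]$ with the step weight $\eta(s_j)$, sums, performs summation by parts, expands $\eta(s_j)-\eta(s_{j-1})$ to second order (this is where $\eta\in C^2$ is used, to kill the remainder $\sum\eta''(\xi_j)\frac{\|u(s_j)\|^2}{2}(s_j-s_{j-1})^2$), and passes to the limit using the a.e.\ continuity and boundedness of $\|u(t)\|^2$ (from partial regularity) to make the Riemann sums converge -- exactly the delicate point you identified. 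Your main route instead packages the family of inequalities $G(t)\le G(s)$ into the single statement that $G=E+\int_0^{\cdot}(\Delta-\Pi)$ has a nonincreasing representative $\bar G$, and then tests $d\bar G\le 0$ against $\eta\ge 0$ via Lebesgue--Stieltjes integration by parts; the endpoint inequalities $\bar G(t_0)=G(t_0)$ (from the Lebesgue-point hypothesis) and $\bar G(t_1^-)\ge G(t_1)$ are oriented the right way against $\eta\ge0$, so the conclusion follows. What each buys: the paper's proof is self-contained and uses nothing beyond Riemann sums, at the cost of requiring $\eta\in C^2$ and the Riemann integrability of $\|u(\cdot)\|^2$; your Stieltjes argument needs only $\eta\in C^1$ (even Lipschitz) and only Lebesgue integrability, and isolates the monotone structure cleanly, at the cost of BV/measure-theoretic bookkeeping at the endpoints, which you correctly flag and handle. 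Both are complete proofs of the statement.
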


\begin{proof}

Recall that the Leray-Hopf solutions satisfy the energy inequality (\ref{leray_en_ineq}) for all $t_0\ge0$ - Lebesgue points of $\|u(\cdot)\|^2$ and for all $t\ge t_0$.
Moreover, the set of Lebesgue points is dense in $[0,\infty)$. 
Additionally, $u(t)$ is regular on a countable union of open intervals of the full measure inside any $[0,T]$, which means $\|u(t)\|^2$ is continuous a.e. Finally, $\|u(t)\|^2$ is bounded on $[0,\infty)$ -- a direct consequence of the Leray-Hopf energy inequality (\ref{leray_en_ineq}). Therefore, $\|u(t)\|^2$ is Riemann integrable on any compact interval in $[0,\infty)$.

\medskip

Now let $t_0\ge0$ be a Lebesgue point for $\|u(t)\|^2$, $t_1>t_0$, and let $\eta\in C^2([t_0,t_1],\mathbb{R}_+)$. For a $\delta>0$, consider a Riemann partition 
$\{s_j\}_{j=0,n_{\delta}}$, where each $s_j$ is a Lebesgue point for $\|u(t)\|^2$, $t_0=s_0<s_1<\dots<s_{n_{\delta}}=t_1$, and each $s_{j}-s_{j-1}<\delta$.  If for $j<n_{\delta}$ we define the step functions 
\[
\eta_j(t)=\left\{\begin{array}{ll}
\eta(s_j),\quad &t\in[s_j,s_{j+1});\\
0, &\mbox{otherwise},
\end{array}\right.
\]
then, as $\delta\to 0$,  $\sum\eta_j$ converges to $\eta$ a.e. on $[t_0,t_1]$. Also, for each $j=0,\dots,n_{\delta}-1$, the Leray-Hopf energy inequality (\ref{leray_en_ineq}) on $[s_j,s_{j+1}]$ implies:
\[
\eta(s_{j})\frac{\|u(s_{j+1})\|^2}{2}-\eta(s_j)\frac{\|u(s_j)\|^2}{2} \le - \nu\int\limits_{t_0}^{t_1} \eta_j\,\|\nabla u\|^2 + \int\limits_{t_0}^{t_1} \eta_j\, (f,u)\;.
\]
Summing in $j$, we obtain
\begin{equation}\label{in1}
\sum\limits_{j=0}^{n_{\delta}-1}\eta(s_{j})\frac{\|u(s_{j+1})\|^2}{2}-\sum\limits_{j=0}^{n_{\delta}-1}\eta(s_j)\frac{\|u(s_j)\|^2}{2} \le - \nu\int\limits_{t_0}^{t_1} \sum\limits_{j=0}^{n_{\delta}-1}\eta_j\,\|\nabla u\|^2 + \int\limits_{t_0}^{t_1} \sum\limits_{j=0}^{n_{\delta}-1}\eta_j\, (f,u)\;.
\end{equation}
Notice that by the Lebesgue dominated convergence theorem, as $\delta\to0$, $\int_{t_0}^{t_1} \sum\eta_j\,\|\nabla u\|^2\to\int_{t_0}^{t_1} \eta\,\|\nabla u\|^2$, while 
$\int_{t_0}^{t_1} \sum\eta_j\, (f,u)\to\int_{t_0}^{t_1} \sum\eta\, (f,u)$.

\medskip

To deal with the right-hand side of (\ref{in1}), write
\[
\begin{aligned}
&\sum\limits_{j=0}^{n_{\delta}-1}\eta(s_{j})\frac{\|u(s_{j+1})\|^2}{2}-\sum\limits_{j=0}^{n_{\delta}-1}\eta(s_j)\frac{\|u(s_j)\|^2}{2}=\\
&=\eta(s_{n_{\delta}-1})\frac{\|u(s_{n_{\delta}})\|^2}{2}-\eta(s_{0})\frac{\|u(s_{0})\|^2}{2}-
\sum\limits_{j=1}^{n_{\delta}-1}(\eta(s_{j})-\eta(s_{j-1}))\frac{\|u(s_{j})\|^2}{2}\,.
\end{aligned}
\]
Notice that
\[
\sum\limits_{j=1}^{n_{\delta}-1}(\eta(s_{j})-\eta(s_{j-1}))\frac{\|u(s_{j})\|^2}{2}=\sum\limits_{j=1}^{n_{\delta}-1}\eta'(s_j)\frac{\|u(s_{j})\|^2}{2}(s_j-s_{j-1}) -
\sum\limits_{j=1}^{n_{\delta}-1}\eta''(\xi_j)\frac{\|u(s_{j})\|^2}{2}(s_j-s_{j-1})^2\,,
\]
where $\xi_j\in [s_{j-1},s_j]$. We can identify the first sum in the right-hand side as the Riemann sum (with a missing 0-th term) corresponding to $\int_{t_0}^{t_1}\eta_t\frac{\|u\|^2}{2}$, while the absolute value is the second sum is bounded above by:
\[\left|  \sum\limits_{j=1}^{n_{\delta}-1}\eta''(\xi_j)\frac{\|u(s_{j})\|^2}{2}(s_j-s_{j-1})^2 \right|\le M\delta R_{n_{\delta}}\,,\]
where $M=\sup\{\eta''(t):\ t\in[t_0,t_1]\}<\infty$, and $R_{n_{\delta}}$ is the Riemann sum  corresponding to $\int_{t_0}^{t_1}\frac{\|u\|^2}{2}<\infty$. Thus, this term converges to zero as $\delta\to 0$.

Consequently, (\ref{LH-gen}) is obtained form (\ref{in1}) by letting $\delta\to 0$.

\end{proof}

\begin{rem}{\rm
Clearly, (\ref{LH-gen}) implies the usual Leray-Hopf inequality (\ref{leray_en_ineq}).
Also, (\ref{LH-gen}) implies (\ref{en_ineq1}) for $\eta\in C_0^2((0,T),\mathbb{R}_{+})$, and therefore, any Leray-Hopf solution will automatically satisfy the localized in time global energy inequality considered in Section \ref{en-enst-sec}. 

}\end{rem}


\bibliographystyle{plain}
\bibliography{3d_forced_turbulence-1}

\end{document}